\documentclass[psamsfonts,11pt]{amsart}
\usepackage{mathpazo}
\usepackage[top=3cm,bottom=2cm,left=3cm,right=3cm]{geometry}
\usepackage{overpic}
\usepackage{amsmath,amsthm,amssymb,amscd,amsfonts,amsbsy}
\usepackage{color}
\usepackage{hyperref,url}
\usepackage{float}
\usepackage{hyperref}
\usepackage{enumerate}
\usepackage{enumitem}   
\usepackage{mathrsfs}  
\usepackage{mathrsfs}  
\usepackage{amsthm}
\usepackage{amsmath}
\usepackage{amsfonts}
\usepackage{amssymb}
\usepackage{amsthm}
\usepackage{amsmath}
\usepackage{amsfonts}
\usepackage{amssymb}
\usepackage{mathtools}
\usepackage{mathrsfs}
\usepackage{comment}
\usepackage{svg}

\usepackage{amsmath, amssymb, graphics, setspace}
\usepackage[utf8]{inputenc}
\usepackage{listings,xcolor}
\usepackage[english]{babel}

\newcommand{\mathsym}[1]{{}}
\newcommand{\unicode}[1]{{}}

\usepackage[normalem]{ulem}

\newcommand{\R}{\ensuremath{\mathbb{R}}}

\usepackage[toc,page]{appendix}
\usepackage{tikz-cd}

\newtheorem{theorem}{Theorem}[section]
\newtheorem{corollary}[theorem]{Corollary}
\newtheorem{lemma}[theorem]{Lemma}
\newtheorem{proposition}[theorem]{Proposition}

\newtheorem{definition}{Definition}
\newtheorem{remark}{Remark}

\newtheorem{theoremA}{Theorem}

\def\R{\mathbb R}

\newcommand{\vphi}{\varphi}
\newcommand{\cal}[1]{\ensuremath{\mathcal{#1}}}
\newcommand{\set}[1]{\left\{#1\right\}}
\newcommand{\abs}[1]{\left\lvert#1\right\rvert}
\newcommand{\Bigabs}[1]{\Big\lvert#1\Big\rvert}
\newcommand{\biggabs}[1]{\bigg\lvert#1\bigg\rvert}
\newcommand{\norm}[1]{\left\lVert#1\right\rVert}
\newcommand{\hausdorff}[1]{\operatorname{dim_H}\left(#1\right)}

\newcommand{\opname}[1]{\operatorname{#1}}
\renewcommand{\subset}{\subseteq}
\renewcommand{\supset}{\supseteq}
\newcommand{\st}{\text{ such that }}
\newcommand{\ov}[1]{\ensuremath{\overline{#1}}}

\newcommand{\cl}[1]{\ensuremath{\overline{#1}}}

\def\N{\mathbb{N}}
\def\R{\mathbb{R}}

\DeclareMathOperator{\proj}{proj}

\graphicspath{{figures/}}

\title[infinite-piecewise expanding maps]
{Infinite-Piecewise Expanding Maps: \\ Chaos, Ergodicity and Invariant-Set Complexity}
\author{Matheus G. C. Cunha$^1$\and Douglas D. Novaes$^2$ \and Gabriel Ponce$^3$}

\address{$^1$Departamento de Matem\'{a}tica - Instituto de Bioci\^{e}ncias, Letras e Ci\^{e}ncias Exatas (IBILCE) - Universidade
Estadual Paulista (UNESP), \ Rua Crist\'{o}vao Colombo, 2265, Jardim Nazareth, CEP 15054-000, Sao Jos\'{e} do Rio Preto, SP,
Brazil} \email{matheus.gc.cunha@unesp.br}

\address{$^2$Departamento de Matem\'{a}tica - Instituto de Matem\'{a}tica, Estat\'{i}stica e Computa\c{c}\~{a}o Cient\'{i}fica (IMECC) - Universidade
Estadual de Campinas (UNICAMP), \ Rua S\'{e}rgio Buarque de Holanda, 651, Cidade Universit\'{a}ria Zeferino Vaz, CEP 13083-859, Campinas, SP,
Brazil} \email{ddnovaes@unicamp.br}

\address{$^3$Departamento de Matem\'{a}tica - Instituto de Ciências Matemáticas e de Computação (ICMC) - Universidade de São Paulo (USP), \ Avenida Trabalhador São-Carlense, 400, Centro, CEP 13566-590, São Carlos, SP,
Brazil} \email{gaponce@icmc.usp.br}

\usepackage{mathpazo}

\allowdisplaybreaks
\begin{document}

\subjclass[2020]{Primary 37E99, 37A99, 26A18, 28A78, 28A80; Secondary 34A36, 37C29}

\keywords{infinite-piecewise expanding maps, conformal iterated function systems, invariant Cantor sets, Hausdorff dimension, conformal measures, ergodicity, chaotic dynamics}

\maketitle
\begin{abstract} 
In this paper, we study a class of one-dimensional piecewise maps defined by infinitely many smooth expanding branches. This class arises naturally in the context of non-smooth dynamical systems and includes the first-return maps locally defined near sliding Shilnikov connections. By means of the theory of conformal iterated function systems (CIFS), we investigate several dynamical properties of these maps as well as the topological complexity of their invariant sets. In particular, we show that the dynamics restricted to the invariant set is topologically conjugate to the shift on $\mathbb{N}^{\mathbb{N}}$. We also establish the existence of a unique conformal measure that is invariant and ergodic under the map.
 \end{abstract}

\section{Introduction}
\label{section:introduction}

In \cite{PacificoRovellaViana1998InfinitemodalMapsGlobal}, Pacifico et al. investigated a class of one-dimensional infinite-modal maps arising in the study of three-dimensional vector fields exhibiting saddle-focus homoclinic connections. These maps emerge as reduced models of the corresponding first return maps and provide an effective framework for understanding the intricate dynamics generated by such homoclinic phenomena.

A related setting arises in the context of non-smooth dynamical systems. In particular, sliding Shilnikov connections (see Figure \ref{fig:sliding_shilnikov}, left), introduced in \cite{NovaesTeixeira2019ShilnikovProblemFilippov}, are homoclinic connections intrinsic to Filippov systems \cite{Filippov1988DifferentialEquationsDiscontinuous} and can be regarded as the non-smooth counterparts of saddle-focus homoclinic connections. It was shown in \cite{CunhaNovaesPonce2024HausdorffDimensionCantor} that the first-return map associated with a sliding Shilnikov connection is a one-dimensional piecewise map with infinitely many smooth expanding branches, whose domains accumulate at the origin, as illustrated in Figure \ref{fig:sliding_shilnikov}, right.

\begin{figure}[H]
\centering 
\begin{overpic}[width=0.84\linewidth]{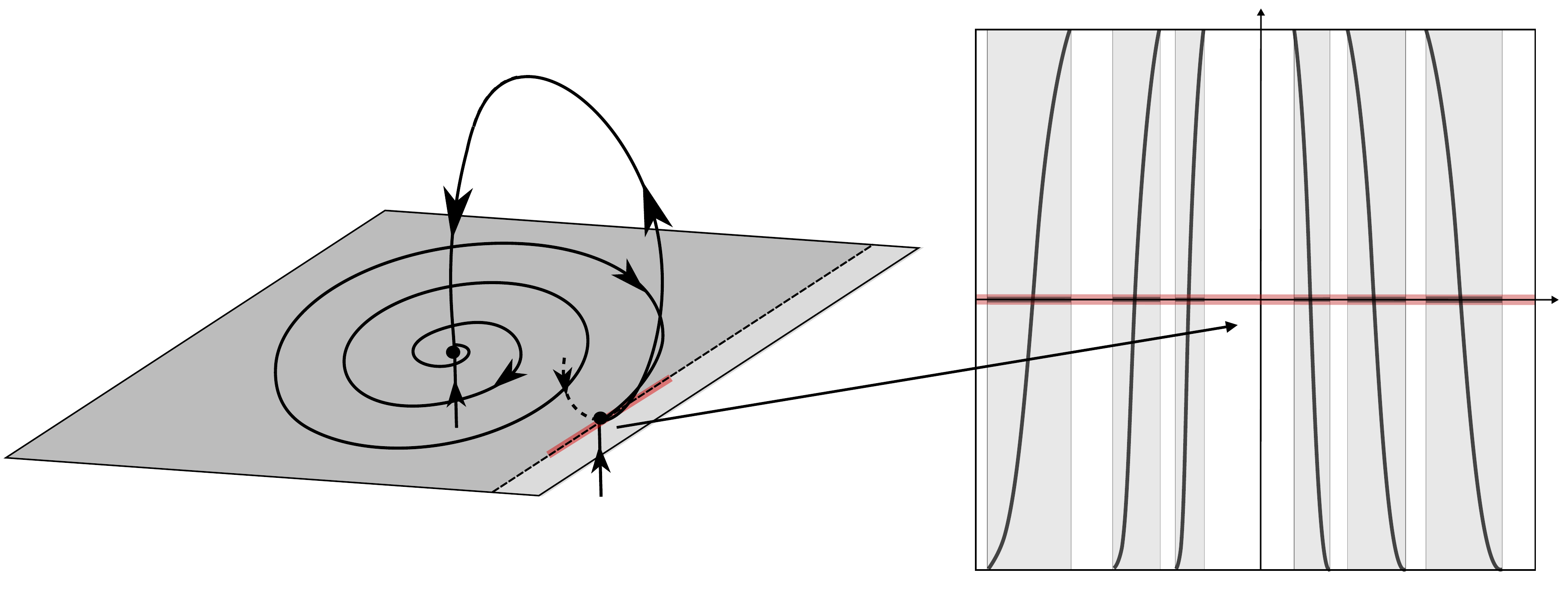}
\put(76,20){\small$\cdots$}
\put(80,20){\small$\cdots$}
\end{overpic}
\vspace{0.2cm}
\caption{Left: Representation of a sliding Shilnikov connection. Right: Representation of the first return map defined around a sliding Shilnikov connection}
\label{fig:sliding_shilnikov}
\end{figure}

This provides the main motivation for the present work. We introduce a general class of piecewise maps that encompasses, as a particular case, the maps arising from sliding Shilnikov connections. The class consists of one-dimensional maps defined by infinitely many smooth expanding branches, which we refer to as {\it infinite-piecewise expanding maps}.

Under suitable assumptions, we investigate several dynamical properties of these maps and the topological complexity of their invariant sets. As we shall see, both are naturally addressed through the theory of conformal iterated function systems.

Our main result, Theorem \ref{theoremA}, not only strengthens the main results of \cite{CunhaNovaesPonce2024HausdorffDimensionCantor} in the context of sliding Shilnikov connections, but also extends them to the broader class of infinite-piecewise expanding maps.

The remainder of this section is devoted to formally introducing the class of infinite-piecewise expanding maps considered in this paper and to stating the main theorem.

\subsection{Definition of the class of infinite-piecewise expanding maps}\label{sec:def}
We consider a map $\vphi: W\subset[-1,1]\to[-1,1]$ satisfying the following conditions:

\begin{enumerate}[label= (H\arabic*), ref = (H\arabic*), start = 0]
\item\label{hypothesis:h0}
 $W = \bigcup_{k\ge1} F_k$, where $\mathcal{J} := \set{F_k}_{k\ge1}$ is a collection of pairwise disjoint  isolated closed intervals, in such way that, for each $J\in\mathcal{J}$, the restriction $\vphi_J := \vphi|_J : J\to[-1,1]$ is a diffeomorphism. By isolated, we mean that there are open intervals $G_k\supset F_k$, $k\geq 1$, with $G_i\cap G_j=\emptyset$ if $i\neq j$.
 \end{enumerate}
 
Taking \ref{hypothesis:h0} into account, for each $J\in\mathcal{J}$, we denote $\psi_J := \vphi_J^{-1} : [-1,1] \to J$ and  $\Psi := \set{\psi_J}_{J\in\mathcal{J}}$. We require some additional conditions for the maps in $\vphi$, which actually will be stated for maps in $\Psi$ for the sake of simplicity.

\begin{enumerate}[label= (H\arabic*), ref = (H\arabic*), start = 1]
\item\label{hypothesis:h1} The family $\Psi$ is uniformly contracting, in the sense that there exists $0<s<1$ such that
\[\abs{\psi_J(x) - \psi_J(y)} \le s\cdot\abs{x-y},\]
for every $J\in \mathcal J$ and all $x,y\in[-1,1]$.

\item\label{hypothesis:h2} There exist constants $L \in \mathbb R$ and $\alpha>0$ such that
\begin{equation*}\label{equation:distortion_hypothesis}
\abs{\psi_J'(x) - \psi_J'(y)} \le L\cdot \inf_{z\in[-1,1]}\abs{\psi_J'(z)}\cdot\abs{x-y}^{\alpha},
\end{equation*}
for every $J\in \mathcal J$ and all $x,y\in[-1,1]$.

\item\label{hypothesis:h3} There exists a parameter $q \in (1, +\infty]$ and a constant $C \ge 1$ such that, for every finite exponent $p \le q$, there exist a constant $C_p \ge 1$ and an integer $N_p>0$  satisfying
    \begin{equation*}
        \frac{1}{C n^q} \le \sup_{z \in [-1,1]} \abs{\psi_{F_n}'(z)} \le \frac{C_p}{n^p},\,\, \text{for all}\,\, n \ge N_p.
    \end{equation*}
    Here, we are adopting the conventions $1/q = 0$ and $1/(C n^q) = 0$ when $q = +\infty$.
    
\end{enumerate}

Conditions \ref{hypothesis:h1} and \ref{hypothesis:h2} are standard. They impose a necessary uniformity on the contraction rates of the inverse branches and require their derivatives to be uniformly H\"older continuous. If the system consisted of only a finite number of branches, these conditions would naturally reduce to the requirement that each restriction to a branch is a strict contraction with a H\"older continuous derivative.

Condition \ref{hypothesis:h3} is a little less intuitive than the others and essentially states that the derivatives $\psi'_{F_n}(z)$ have polynomial (or super-polynomial) decay as $n\to +\infty$, where the parameter $q$ captures the supremum of valid polynomial decay rates. In terms of the original map $\vphi$, this is equivalent to saying that if $\vphi^{-1}(z) = \set{z_1, z_2, z_3, \ldots}\subset [-1,1]$, then the sequence of derivatives $\set{\abs{\vphi'(z_n)}}_{n\ge1}$ has polynomial (or super-polynomial) growth. 

\begin{remark}\label{remark}
Proposition \ref{prop:series_bounds} in Appendix \ref{section:appendixA} shows that \ref{hypothesis:h3} implies convergence of the series 
\begin{equation*}
    \sum_{J\in\cal{J}}\left(\sup_{z\in[-1,1]}\abs{\psi_{J}'(z)}\right)^t
\end{equation*}
for all $t>1/q$.
In particular, since $q > 1$, by taking $t = 1 > 1/q$ in Proposition \ref{prop:series_bounds}, we get
\begin{equation}\label{equation:convergence_hypothesis}
    \sum_{J\in\cal{J}}\left(\sup_{z\in[-1,1]}\abs{\psi_{J}'(z)}\right) <+\infty.
\end{equation}
For several of our geometric bounds, we will only require the convergence condition in \eqref{equation:convergence_hypothesis}, rather than the stronger assumption in \ref{hypothesis:h3}. However, \ref{hypothesis:h3} is technically more accessible, allows for explicit quantitative bounds on the Hausdorff dimension in our main result, and has a clearer dynamical interpretation, which is why we retain it among the main hypotheses.

Corollary \ref{corollary:exponential_bounds} in Appendix \ref{section:appendixA}  also implies that there exists an integer $N \ge 1$ such that
\[
    \sum_{\eta\in\cal{J}^N}\left(\sup_{z\in[-1,1]}\abs{\psi_{\eta}'(z)}\right) < 1,
\]
where, for $\eta = (\eta_1, \eta_2, \ldots, \eta_N)\in\cal{J}^N$, we define $\psi_{\eta} := \psi_{\eta_1}\circ\cdots\circ \psi_{\eta_N}$. The proof of this statement is given in Lemma \ref{lemma:exponential_decay_measure} of the Appendix \ref{section:appendixA} .

\end{remark}

\subsection{Statement of the main result}
Consider the invariant set of $\varphi$ given by
\begin{equation}
\label{equation:invariant_set}
\Lambda := \set{w\in W: \vphi^k(w)\in  [-1,1], \text{ for all } k\ge0} = \bigcap_{k\ge0}\vphi^{-k}[-1,1].
\end{equation}
Other sets that will be of interest are the {\it singular set}
\begin{equation}\label{equation:Z_1}
Z_1 := \ov{W}\setminus W	
\end{equation}
and the {\it spread singular set}
\begin{equation}\label{equation:Z}
Z := \bigcup_{k\ge0}\vphi^{-k}(Z_1).
\end{equation}

Our main result provides a topological understanding of the invariant set and its closure, as well as insights into their complexity in terms of Hausdorff dimension  $\hausdorff{\cdot}$  and one-dimensional Lebesgue measure as $\mu(\cdot)$. It also characterizes the dynamics of $\vphi|_{\Lambda}$ via a topological conjugacy with the shift map on $\mathbb{N}^{\mathbb{N}}$. Moreover, we establish the existence of a probability measure in $\Lambda$, invariant and ergodic regarding $\vphi$, that distinguishes the images of $\psi_{J_i}, \psi_{J_j}\in\Psi$ (for $i\neq j$) and possesses a quantitative relationship with the Hausdorff dimension of the invariant set.

\begin{theoremA}\label{theoremA}
Consider the infinite-piecewise expanding map
\[
\vphi: W\subset[-1,1]\to[-1,1]
\]
satisfying conditions \ref{hypothesis:h0}--\ref{hypothesis:h3}.  Let $\Lambda$ be its invariant set as defined in \eqref{equation:invariant_set} and $Z_1$ and $Z$ as defined in \eqref{equation:Z_1} and \eqref{equation:Z}, respectively. Then, the following statements hold.
\begin{enumerate}[label= {(\alph*)}, ref = A(\alph*)]
	
	\item\label{theorem:geometry_of_invariant_set}
	Let $q \in (1, +\infty]$ be the parameter as defined in \ref{hypothesis:h3}. Then,
	\[
    \frac{1}{q} < \dim_H(\Lambda) < 1.
	\]
	Furthermore, $\mu(\Lambda) = 0$.
	
	\item\label{theorem:structure_of_closure_of_lambda}
	The closure of the invariant set is given by the disjoint union $\ov{\Lambda}=\Lambda\,\dot{\cup}\,Z$.
	
	\item\label{theorem:geometry_of_closure_of_lambda}
	$\hausdorff{\ov{\Lambda}} = \max\set{\hausdorff{\Lambda}, \hausdorff{Z_1}}$ and $\mu(\ov{\Lambda}) = \mu(Z)$.
	
	\item\label{theorem:cantor_set}
	$\ov{\Lambda}$ is a Cantor set.
 
 	\item\label{theorem:topological_conjugation}
	The map $\vphi|_\Lambda: \Lambda\to\Lambda$ is topologically conjugate to the shift on $\N^\N$.
	
	\item\label{theorem:conformal_measure}
	There exists a probability measure $m$ on $\Lambda$ that is $\hat{t}$-conformal, for $\hat{t} := \hausdorff{\Lambda}$. Moreover, there exists a unique measure $m^* \sim m$ which is invariant and ergodic under $\vphi$. \end{enumerate}
\end{theoremA}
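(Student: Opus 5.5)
The plan is to recast everything in the language of conformal iterated function systems in the sense of Mauldin--Urbański. By \ref{hypothesis:h0}--\ref{hypothesis:h2}, $\Psi=\set{\psi_J}_{J\in\mathcal{J}}$ is a CIFS on $X=[-1,1]$, with countable alphabet $\mathbb{N}$ (identifying $F_n$ with $n$). The ingredients are:
\begin{itemize}
\item the open set condition, since the images $\psi_J((-1,1))$ are pairwise disjoint;
\item uniform contraction, from \ref{hypothesis:h1};
\item bounded distortion, from the H\"older estimate \ref{hypothesis:h2};
\item the cone condition, which is trivial in dimension one.
\end{itemize}

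The first step is to identify $\Lambda$ with the limit set $J_\Psi=\pi(\mathbb{N}^\mathbb{N})$. Here the coding map is $\pi(\omega)=\bigcap_n\psi_{\omega_1}\circ\cdots\circ\psi_{\omega_n}([-1,1])$, and this intersection is a single point because of \ref{hypothesis:h1}. If $w\in\Lambda$, its itinerary $\omega$ with $\vphi^k(w)\in F_{\omega_{k+1}}$ gives $w=\pi(\omega)$. Conversely, points of $\pi(\mathbb{N}^\mathbb{N})$ never leave $W$. The map $\pi$ is injective because the $F_k$ are disjoint, and it satisfies $\vphi\circ\pi=\pi\circ\sigma$.

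For \ref{theorem:topological_conjugation}, I would show that $\pi$ is a homeomorphism onto $\Lambda$:
\begin{itemize}
\item continuity of $\pi$ follows from cylinder diameters $\le s^n$;
\item continuity of $\pi^{-1}$ follows from the isolation hypothesis, since the $G_k$ separate the $F_k$. A point of $\Lambda\cap F_k$ has a neighbourhood meeting only $F_k$, so the first symbol is locally constant, and iterating this gives the claim.
\end{itemize}

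For \ref{theorem:geometry_of_invariant_set}, I would use the pressure $P(t)=\lim_n\frac1n\log\sum_{|\omega|=n}\|\psi_\omega'\|^t$ and Bowen's formula $\dim_H\Lambda=\inf\set{t:P(t)\le0}$. By Remark \ref{remark}, $P(t)<\infty$ for $t>1/q$. The strict inequalities are as follows.
\begin{itemize}
\item For $\hat t<1$, I would use $\sum_{\eta\in\mathcal{J}^N}\|\psi_\eta'\|<1$, which gives $P(1)<0$. Continuity and strict decrease of $P$ on its finiteness interval then give $\hat t<1$.
\item For $\hat t>1/q$, when $q<\infty$ the lower bound in \ref{hypothesis:h3} together with bounded distortion gives $\sum_n\|\psi_{F_n}'\|^{t}\ge\sum_n (Cn^q)^{-t}=\infty$ at $t=1/q$. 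So $P(1/q)=\infty$, the system is ``strongly regular'' (there is $t$ with $0<P(t)<\infty$), and $\hat t>\theta=1/q$. When $q=\infty$ the inequality is just $\hat t>0$, which holds because $\Lambda$ is uncountable and contains limit sets of finite subsystems with positive dimension.
\item $\mu(\Lambda)=0$ follows from $\hat t<1$.
\end{itemize}

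For \ref{theorem:structure_of_closure_of_lambda}--\ref{theorem:cantor_set}, I would describe the accumulation points. Take a sequence $\pi(\omega^{(j)})\to x$. If the symbols stay bounded level by level, compactness of $\mathbb{N}^\mathbb{N}$ restricted to bounded symbols gives $x\in\Lambda$. Otherwise there is a first level $k$ at which the symbols escape to infinity. Then $\vphi^{k}(x)$ is a limit of points in $F_{n_j}$ with $n_j\to\infty$, hence lies in $Z_1$, and $x\in\vphi^{-k}(Z_1)$. The converse inclusion $Z\subset\ov\Lambda$ comes from $Z_1\subset\ov W$ and the fact that each $F_n$ contains points of $\Lambda$, pulled back by continuous inverse branches. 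Disjointness holds because points of $Z$ eventually leave $W$.

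Part \ref{theorem:geometry_of_closure_of_lambda} then follows:
\begin{itemize}
\item $Z$ is a countable union of bi-Lipschitz (indeed $C^1$-diffeomorphic) images of $Z_1$, which have the same Hausdorff dimension, and dimension is countably stable;
\item the measure identity follows from $\mu(\Lambda)=0$.
\end{itemize}
For \ref{theorem:cantor_set}, $\ov\Lambda$ is compact. It is perfect, since each point is approximated by distinct cylinder points. It is totally disconnected because it has empty interior: $\mu(\Lambda)=0$, and $Z$ is a countable union of images of $Z_1$, which is nowhere dense since it is the accumulation set of isolated intervals. This last point is where I would be most careful.

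For \ref{theorem:conformal_measure}, I would invoke the Mauldin--Urbański theorem. Since the system is regular ($P(\hat t)=0$, guaranteed by strong regularity), there is a unique $\hat t$-conformal probability measure $m$ supported on $J_\Psi=\Lambda$. There is also a unique $\sigma$-invariant, hence $\vphi$-invariant, ergodic measure $m^*\sim m$; it is obtained from the Perron--Frobenius operator and is transferred via the conjugacy $\pi$.

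The main obstacle I expect is verifying regularity and the strict lower bound $\hat t>1/q$. This requires comparing $\sup|\psi_{F_n}'|$ with $\inf|\psi_{F_n}'|$ through \ref{hypothesis:h2}, and showing that the pressure is finite on $(1/q,\infty)$ but diverges at $1/q$. For this I would rely on Proposition \ref{prop:series_bounds} and Corollary \ref{corollary:exponential_bounds}.
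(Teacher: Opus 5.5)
Your overall route is the paper's: verify the CIFS conditions, get $P(1)<0$ from Corollary~\ref{corollary:exponential_bounds}, use divergence of $P_1$ at $1/q$ to get strong regularity, then apply Bowen's formula and the Mauldin--Urba\'nski measures. Your sequential-compactness proof of (b) and your isolation argument for continuity of $\pi^{-1}$ in (e) are sound.

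The genuine gap is total disconnectedness in (d). You conclude that $\overline{\Lambda}=\Lambda\cup Z$ has empty interior because $\mu(\Lambda)=0$ and $Z$ is a countable union of closed nowhere dense sets $\psi_\eta(Z_1)$. That inference is false. The interval $[0,1]$ is itself the union of a dense null $G_\delta$ set and its complement, and that complement is a countable union of closed nowhere dense sets. If some interval $I$ lay in $\overline{\Lambda}$, Baire would only give that $\Lambda\cap I$ is comeager in $I$, and this is compatible with $\mu(\Lambda\cap I)=0$. A measure argument cannot close the gap either, because $Z$ may have positive Lebesgue measure. The fat Cantor example of Appendix~\ref{section:appendixB} is such a case, with $\mu(\overline{\Lambda})=\mu(Z)>0$. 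Being careful about the nowhere density of $Z_1$, which is the step you flag, does not repair this.

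What is missing is the gap structure of the system. The set $G=(-1,1)\setminus\overline{W}$ is non-empty and open by isolation, and it is disjoint from $W\supset\Lambda$. By invariance, every $\psi_\eta(G)$ is a non-empty open interval disjoint from $\Lambda$, and hence disjoint from $\overline{\Lambda}$. Now suppose an open interval $I$ lay in $\overline{\Lambda}$. Then:
\begin{enumerate}
\item $I$ contains some point $x=\pi(\omega)\in\Lambda$;
\item for $k$ large, the cylinder $\psi_{\omega|_k}([-1,1])$ lies in $I$, since its diameter is at most $2s^k$;
\item so $I$ contains the open interval $\psi_{\omega|_k}(G)$, which misses $\overline{\Lambda}$, a contradiction.
\end{enumerate}
This is essentially the paper's argument.

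A minor point: for $q=\infty$ you only show $\hat t>0$, but (f) also needs regularity in that case. It follows exactly as in your $q<\infty$ argument, since $P_1(0)=\infty$.
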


The methods developed in this work are sufficiently general to admit a wide range of applications. For example, in \cite{CunhaNovaesPonce2024HausdorffDimensionCantor}, the local first-return map associated with a sliding Shilnikov connection in a Filippov system is shown to satisfy the hypotheses of Theorem \ref{theoremA}. Consequently, it provides conclusions concerning geometric, topological, and dynamical properties of the corresponding local invariant set. In fact, items (a)–(d) of Theorem \ref{theoremA} were already established in \cite{CunhaNovaesPonce2024HausdorffDimensionCantor} for the sliding Shilnikov setting. The present framework, however, allows these results to be strengthened by additionally guaranteeing properties (e) and (f) of Theorem \ref{theoremA}.

The first-return map associated with a sliding Shilnikov connection provides perhaps the simplest example of an infinite-piecewise expanding map in the class considered here, with $Z_1$ consisting of a single point. Nevertheless, the set $Z_1$ can exhibit a much richer structure. While it is straightforward to construct examples in which $Z_1$ is finite or countably discrete, Appendix \ref{section:appendixB} presents a first instance where $Z_1$ is the triadic Cantor set. Moreover, it is shown there that the same construction can be adapted to generate examples in which $Z_1$ has positive Hausdorff dimension and even positive Lebesgue measure.

\subsection{Structure of the paper} In Section \ref{section:cifs}, we introduce the aspects of the theory of iterated function systems that are relevant to our analysis. In Section \ref{section:dynamics_via_cifs}, we establish the connection between the infinite-piecewise expanding maps introduced earlier and the theory of conformal iterated function systems. In Section \ref{subsection:closure_of_lambda} we look into the topological structure of the closure of the invariant set $\Lambda$. Finally, the proofs of the statements of Theorem \ref{theoremA} are presented in Section \ref{section:proofs}. 

\section{Conformal iterated function systems \emph{(CIFS)}}
\label{section:cifs}

The setting of our main tool for our studies consists of an at most countable collection of contractions on a compact set $K = \ov{K^{\circ}} \subset \R^d$. This set of functions $\mathcal{F} := \set{f_i : K\to K}_{i\in I}$, with the condition that, for all $x, y \in K$ and for each $i \in I$, there exists a constant $s_i < 1$ such that
\[
\abs{f_i(x) - f_i(y)}\le s_i\cdot\abs{x-y},
\]
is called an \emph{iterated function system (IFS)}.

For $k \ge 1$, we can consider (finite) sequences of the form $\eta = (\eta_1,\ldots,\eta_k) \in \bigcup_{j\ge1} I^j$ and associate the function $f_{\eta}$ by
\[
f_{\eta} := f_{\eta_1}\circ\cdots\circ f_{\eta_k}.
\]
Also, for $\omega \in \cal{J}^k$ and $\eta \in \cal{J}^l$, we define $\omega\eta\in\cal{J}^{k+l}$ as the juxtaposition and it induces the composition $\psi_\omega \circ\psi_\eta$.

Assuming an uniform contraction rate $s < 1$, we can define a projection map $\proj : I^\N \to K$ for every (infinite) sequence $\omega = (\omega_1, \omega_2, \omega_3, \ldots) \in I^\N$ by
\begin{equation*}
\set{\proj(\omega)} = \bigcap_{k\ge1}f_{\omega|_k}(K),
\end{equation*}
where $\omega|_k := (\omega_1, \ldots, \omega_k)$ for each $k \ge1$. Since each $f_i$ is a contraction with a uniform bound $s < 1$, the resulting sequence of sets forms a nested collection of non-empty compact subsets with diameters approaching zero, so  the intersection is a single point --- therefore $\proj$ a well-defined map.

Using these notions, we say that the \emph{attractor set} of an IFS is:
\begin{equation}\label{equation:atractor_set}
\Delta := \proj\left(I^{\N}\right) = \bigcup_{\omega\in I^{\N}}\set{\proj(\omega)} = \bigcup_{\omega\in I^{\N}} \bigcap_{k\ge1}f_{\omega|_k}(K).
\end{equation}
This set has the invariance property $\Delta = \bigcup_{i\in I}f_i(\Delta)$, and it is the largest set satisfying it.

The theory of infinite IFS has many differences from its finite counterpart. This motivates the introduction of the theory of \emph{conformal iterated function systems (CIFS)}, a generalization of the theory that deals with infinite IFS having well-behaved properties and geometry. To study this, from now on, let $\cal{F} := \set{f_i}_{i\in I}$ be an IFS satisfying the following conditions:
\begin{enumerate}[label= (C\arabic*), ref = (C\arabic*)]
    \item\label{condition:injection}
    The functions $f_i:K\to K$ are injective, for all $i\in I$, ;
    
    \item\label{condition:unifcontract}
    There exists $s<1$ such that
    \[
    \abs{f_i(x) - f_i(y)} \le s\cdot\abs{x - y}, \quad \text{for any } x,y\in K \text{ and for all }i\in I;
    \]
    in other words, they are uniformly contractive on $K$;
    
    \item\label{condition:osc}
    (Open Set Condition) The set $K$ is connected, and satisfies both $f_i(K^{\circ})\subset K^{\circ}$ and $f_i(K^{\circ})\cap f_j(K^{\circ})=\emptyset$, for all $i,j\in I, i\neq j$;
    
    \item\label{condition:conformal}(Conformal Property)
    There exists $V=V^{\circ}\subset\R^d$ containing $K$, such that each one of the maps $f_i$ extends to a function $f_i^V$, a $\cal{C}^{1,\varepsilon}$ diffeomorphism on $V$. Besides, these maps are conformal, meaning they satisfy $Df_i^V(x) = \rho_{x,i}\cdot\opname{Isom}_{x,i}$, where $\rho_{x,i}\in\R$ and $\opname{Isom}_{x,i}:\R^d\to\R^d$ is an isometry, for any $x\in V$ and for all $i\in I$;

	\item\label{condition:cone_alt}
	The inequality
	\[
	\inf_{x\in\partial K}\inf_{0<r<1}\dfrac{m_d(B_r^{\circ}(x)\cap K^{\circ})}{m_d(B_r^{\circ}(x))} >0,
	\]
	holds, with $m_d$ being the $d$-dimensional Lebesgue measure;

    \item\label{condition:bdp_alt}
    There exist constants $L \ge 1$ and $\alpha > 0$ such that, for every $i\in I$,
    \[
    \biggabs{\norm{Df_i^V(x)} - \norm{Df_i^V(y)}} \le L \cdot \norm{(Df_i^V)^{-1}}_{\infty}^{-1} \cdot \abs{x-y}^{\alpha},
    \]
    where $x,y\in V$ (the open set from Condition \ref{condition:conformal}). Here, $\norm{\cdot}$ denotes the operator norm, and we also have the uniform norm $\norm{(Df_i^V)^{-1}}_{\infty} := \sup_{z \in V} \norm{(Df_i^V(z))^{-1}}$.
\end{enumerate}

It can be proven that Condition \ref{condition:bdp_alt} implies the following:
\begin{enumerate}[label=(BDP), ref=(BDP),  leftmargin=\widthof{(BDP)}]
    \item\label{condition:bdp}(Bounded Distortion Property)
    There exists $M>1$ such that
    \begin{equation*}
    \dfrac{1}{M} \le \dfrac{\norm{Df_{\eta}(x)}}{\norm{Df_{\eta}(y
    )}} \le M,	
    \end{equation*}
    for every $\eta\in \bigcup_{j\ge1} I^j$ and every $x,y\in V$ (again the same set from Condition \ref{condition:conformal});
\end{enumerate}
the proof can be found in \cite[Lemma 2.2]{MauldinUrbanski1996DimensionsMeasuresInfinite}.

These are known collectively as the \textit{conformal conditions}. As a reference to them, we refer to \cite{MauldinUrbanski1996DimensionsMeasuresInfinite, Mauldin1995InfiniteIteratedFunction}.

\subsection{Pressure of a CIFS}

Regarding conformal systems, the so-called \emph{pressure} of the system dictates many geometric properties of the attractor set (see, for instance, \cite[Chapter 3]{MauldinUrbanski1996DimensionsMeasuresInfinite} and \cite[Chapter 6]{Mauldin1995InfiniteIteratedFunction}). Here is the definition:

\begin{definition}[Pressure of a CIFS]
\label{definition:pressure}
For each $k\ge1$, consider the auxiliary functions $P_k:[0,+\infty)\to\R\cup\set{+\infty}$ given by
\[
P_k(t) := \sum_{\eta\in I^k}\norm{Df_{\eta}}_{\infty}^t.
\]

The \emph{pressure of the system} is the function $P:[0,+\infty)\to\R\cup\set{+\infty}$ defined as
\[
P(t) := \lim_{k\to+\infty}\dfrac{1}{k}\log P_k(t)= \lim_{k\to+\infty}\dfrac{1}{k}\log\left(\sum_{\eta\in I^k}\norm{Df_{\eta}}_{\infty}^t\right).
\]
\end{definition}

Now we discuss and list some properties of the auxiliary functions $P_k$ and the pressure function $P$ that will be used.

 Firstly, we remark that $P_1(t)$ is a non-increasing function, implying that the set
 \[
 \cal{P} := \set{t \ge0 \st P_1(t) < +\infty}
 \]
 is an interval of the form $\cal{P} = [\zeta,+\infty)$ or $\cal{P} = (\zeta,+\infty)$. From now on, we will always denote $\zeta := \inf \cal{P}$; also, we call $\cal{P}$ the \emph{domain of finiteness}.

\begin{proposition}[{\cite[Theorem 6.1 and Theorem 6.2]{Mauldin1995InfiniteIteratedFunction}}]
\label{proposition:pressure_function}
The auxiliary functions $P_k$ and the pressure function $P$ satisfy the following properties:
\begin{enumerate}[label = \ref{proposition:pressure_function}(\alph*), ref = \ref{proposition:pressure_function}(\alph*), 
]
    \item\label{proposition:p_k}
    For all $k\ge1$, the functions $P_k(t)$ are non-increasing in the interval $[0,+\infty)$, strictly decreasing in $[\zeta,+\infty)$, and continuous in $\cal{P}$.
    \item\label{proposition:p}
    Similarly, the pressure function $P(t)$ is non-increasing in $[0,+\infty)$, strictly decreasing in $[\zeta,+\infty)$, and continuous in $\cal{P}$.
    \item\label{proposition:pressure_bounds}
    For $M>1$ being the constant from Condition \ref{condition:bdp}, the following bounds are satisfied:
    \[
    -t\log M + \log P_1(t) \le P(t) \le \log P_1(t).
    \] 
\end{enumerate}
\end{proposition}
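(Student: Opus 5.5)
Since $I$ is at most countable and $P_k(t)=\sum_{\eta\in I^k}\norm{Df_\eta}_\infty^t$, I will first get (a) termwise and then pass to $P$ through submultiplicativity; (c) will follow from \ref{condition:bdp}.

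\emph{Item (a).} For $0\le t_1<t_2$, condition \ref{condition:unifcontract} together with conformality gives $\norm{Df_\eta}_\infty\le s^k<1$ for $\eta\in I^k$. Hence each term $\norm{Df_\eta}_\infty^{t}$ is non-increasing in $t$, and so is $P_k$. For $t_1,t_2\in[\zeta,+\infty)$ with $P_k(t_1)<\infty$ we have
\[
P_k(t_2)\le s^{k(t_2-t_1)}P_k(t_1)<P_k(t_1),
\]
which is strict decrease. The boundary case $t_1=\zeta\notin\cal{P}$ is harmless, since there $P_k(\zeta)=+\infty$. I must check that $P_k$ is finite exactly on $\cal{P}$. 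Using $\norm{Df_{\omega\eta}}_\infty\le\norm{Df_\omega}_\infty\norm{Df_\eta}_\infty$, I get $P_k\le P_1^k$. Conversely, by \ref{condition:bdp} I get $\norm{Df_{\omega\eta}}_\infty\ge M^{-1}\norm{Df_\omega}_\infty\norm{Df_\eta}_\infty$, so $P_k\ge M^{-(k-1)t}P_1^k$. Continuity on $\cal{P}$ then comes from dominated convergence. For $t\ge t_0\in\cal{P}$, each term is continuous in $t$ and bounded by $\norm{Df_\eta}_\infty^{t_0}$, which is summable. On the half-open interval $(\zeta,\infty)$ I work locally with $t_0$ slightly larger than $\zeta$.

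\emph{Existence of $P$ and item (c).} The two product inequalities give $P_{k+l}(t)\le P_k(t)P_l(t)$ and $P_{k+l}(t)\ge M^{-t}P_k(t)P_l(t)$. By Fekete's lemma applied to $\log P_k$, the limit $P(t)=\lim_k\frac1k\log P_k(t)=\inf_k\frac1k\log P_k(t)$ exists, and in particular $P(t)\le\log P_1(t)$. Applying Fekete to the superadditive sequence $\log(M^{-t}P_k)$ gives $P(t)=\sup_k\frac1k(\log P_k-t\log M)\ge\log P_1(t)-t\log M$, which is (c). The same bounds also show that $P$ is finite exactly on $\cal{P}$.

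\emph{Item (b).} Monotonicity passes to the limit. For strictness, the bound $P_k(t_2)\le s^{k(t_2-t_1)}P_k(t_1)$ yields
\[
P(t_2)\le P(t_1)+(t_2-t_1)\log s<P(t_1).
\]
The main obstacle is continuity of $P$ on $\cal{P}$, since a pointwise limit need not be continuous. Here I use convexity: by Hölder's inequality each $\log P_k$ is convex in $t$, so $P$ is a pointwise limit of convex functions and is therefore convex and finite on the interval $\cal{P}$. This gives continuity on the interior. At a left endpoint $\zeta\in\cal{P}$, convexity gives only upper semicontinuity from the right, so I must also show $\liminf_{t\downarrow\zeta}P(t)\ge P(\zeta)$. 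For this I use $P=\inf_k\frac1k\log P_k$, an infimum of functions that are continuous on $\cal{P}$, which makes $P$ upper semicontinuous. From the other side, $P=\sup_k\frac1k(\log P_k-t\log M)$ is a supremum of continuous functions, hence lower semicontinuous. Together these give continuity on all of $\cal{P}$, with the endpoint handled automatically.
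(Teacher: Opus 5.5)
The paper does not prove this proposition; it imports it from Mauldin's survey (Theorems 6.1 and 6.2 there). Your self-contained argument is correct and replaces that citation.

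Your ingredients are the standard ones:
\begin{itemize}
\item the termwise comparison $\norm{Df_\eta}_\infty^{t_2}\le s^{k(t_2-t_1)}\norm{Df_\eta}_\infty^{t_1}$, which gives monotonicity and strict decrease;
\item the two-sided product estimate $M^{-t}P_kP_l\le P_{k+l}\le P_kP_l$, from the chain rule plus (BDP), which gives existence of the limit, finiteness of $P_k$ and $P$ exactly on $\mathcal{P}$, and (c);
\item dominated convergence, which gives continuity of each $P_k$ on $\mathcal{P}$.
\end{itemize}

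The most efficient step is the Fekete sandwich
\[
P(t)=\inf_{k\ge1}\tfrac1k\log P_k(t)=\sup_{k\ge1}\tfrac1k\bigl(\log P_k(t)-t\log M\bigr).
\]
It yields (c) and continuity of $P$ on all of $\mathcal{P}$ in one stroke, as the infimum is upper semicontinuous and the supremum lower semicontinuous. This includes the endpoint $\zeta$ when $\zeta\in\mathcal{P}$, so your convexity detour is redundant, though harmless.

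Compared with the citation, your route gives transparency and a quantitative strict decrease, $P(t_2)\le P(t_1)+(t_2-t_1)\log s$, which also shows $P(t)\to-\infty$. It proves exactly the facts that Proposition~\ref{prop:regularity} later relies on: the lower bound in (c), continuity, and strict decrease. The citation only buys brevity.

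One convention should be stated explicitly. The bound $\norm{Df_\eta}_\infty\le s^{k}<1$, on which termwise monotonicity rests, takes the supremum over $K$, as Mauldin--Urba\'nski do. In the paper's one-dimensional setting this is automatic, because the affine extension of $\psi_J$ to $V$ does not increase $\sup\abs{\psi_J'}$.
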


The pressure function also satisfies an upper bound related to the auxiliary functions $P_k$.

\begin{lemma}\label{lemma:subadditivity}
    For any $t \ge 0$, the sequence $k \mapsto \log P_k(t)$ is sub-additive. Consequently, the pressure function satisfies
    \[
        P(t) \le \frac{1}{k} \log P_k(t), \quad \text{for all } k \ge 1.
    \]
\end{lemma}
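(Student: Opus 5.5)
The plan is to establish submultiplicativity of $P_k(t)$ directly from the chain rule and then deduce the bound on $P(t)$ from Fekete's lemma.

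\emph{Submultiplicativity.} Fix $t\ge0$ and $k,l\ge1$. Every $\omega\in I^{k+l}$ can be written uniquely as a juxtaposition $\omega=\eta\xi$ with $\eta\in I^k$ and $\xi\in I^l$, so that $f_\omega=f_\eta\circ f_\xi$. By the chain rule, $Df_\omega(x)=Df_\eta(f_\xi(x))\,Df_\xi(x)$, and submultiplicativity of the operator norm gives
\[
\norm{Df_\omega}_\infty\le \norm{Df_\eta}_\infty\cdot\norm{Df_\xi}_\infty .
\]
(In the conformal setting this is even an equality of the scalar factors pointwise, but the inequality is all we need.) Raising to the power $t\ge0$ preserves the inequality. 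Summing over all pairs $(\eta,\xi)$ then yields
\[
P_{k+l}(t)\le \Big(\sum_{\eta\in I^k}\norm{Df_\eta}_\infty^t\Big)\Big(\sum_{\xi\in I^l}\norm{Df_\xi}_\infty^t\Big)=P_k(t)P_l(t),
\]
with the convention that both sides may equal $+\infty$. Note also that $P_k(t)>0$, since $I$ is nonempty and the maps are diffeomorphisms. Taking logarithms, the sequence $a_k:=\log P_k(t)$ satisfies $a_{k+l}\le a_k+a_l$, i.e.\ it is sub-additive.

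\emph{Deducing the bound.} If $P_k(t)=+\infty$ for some $k$, the inequality $P(t)\le \frac1k\log P_k(t)$ holds trivially. Otherwise each $a_k$ is a real number. Fekete's lemma then gives
\[
\lim_{k\to\infty} \frac{a_k}{k}=\inf_{k\ge1}\frac{a_k}{k},
\]
where the limit may be $-\infty$. This limit is exactly $P(t)$, so in particular the limit in Definition~\ref{definition:pressure} exists, and $P(t)\le a_k/k$ for every $k$.

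If one prefers to avoid quoting Fekete's lemma, there is a direct argument. Fix $k$ and write $n=mk+r$ with $0\le r<k$. Iterating sub-additivity gives $a_n\le m\,a_k+a_r$, where we set $a_0:=0$. Dividing by $n$ and letting $n\to\infty$ yields $\limsup a_n/n\le a_k/k$. This uses only that $a_r$ is finite for each of the finitely many $r<k$ (or trivially true if some $a_r=+\infty$, provided we only apply it when $P_1(t)<\infty$).

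\emph{Main subtlety.} The only delicate step is the case $P_1(t)=+\infty$, i.e.\ $t\notin\mathcal P$. Here $P_k(t)=+\infty$ for every $k$, since $P_k(t)\ge \norm{Df_\eta}_\infty^t$ summed over the infinitely many branches $\eta=(i,j,\dots,j)$ with $j$ fixed and $i$ varying, and these terms are bounded below by a constant multiple of $\norm{Df_i}_\infty^t$ via (BDP). Consequently $P(t)=+\infty$ and the stated inequality reads $+\infty\le+\infty$, which is harmless.
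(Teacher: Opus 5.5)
Your proof is correct and follows the same route as the paper: the chain rule gives $\norm{Df_{\eta\xi}}_\infty\le\norm{Df_\eta}_\infty\norm{Df_\xi}_\infty$, hence $P_{k+l}(t)\le P_k(t)P_l(t)$, and Fekete's lemma then gives $P(t)=\inf_{k}\frac{1}{k}\log P_k(t)$. Your separate handling of the case $P_1(t)=+\infty$ adds care the paper's proof omits. There, every $P_k(t)=+\infty$, so the inequality is trivial and Fekete's lemma for real sequences is not needed.
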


\begin{proof}
    Let $\omega \in \cal{J}^k$ and $\eta \in \cal{J}^l$ and, by the chain rule, we have $\norm{\psi_{\omega\eta}'}_{\infty} \le \norm{\psi_{\omega}'}_{\infty} \norm{\psi_{\eta}'}_{\infty}$. This then implies
    \[
        P_{k+l}(t) = \sum_{\omega\in\cal{J}^k, \eta\in\cal{J}^l} \norm{\psi_{\omega\eta}'}_{\infty}^t \le \left(\sum_{\omega\in\cal{J}^k}\norm{\psi_{\omega}'}_{\infty}^t\right)\left(\sum_{\eta\in\cal{J}^l}\norm{\psi_{\eta}'}_{\infty}^t\right) = P_k(t)P_l(t).
    \]
    So $\log P_{k+m}(t) \le \log P_k(t) + \log P_m(t)$, which is to say that $\log P_k(t)$ is sub-additive. By applying Fekete's Sub-additive Lemma we get
    \[
        P(t) := \lim_{n\to\infty} \frac{1}{n}\log P_n(t) = \inf_{n\ge 1} \frac{1}{n}\log P_n(t).
    \]
    Therefore, $P(t) \le \frac{1}{k} \log P_k(t)$ for all integers $k \ge 1$.
\end{proof}

If the pressure function has a zero, additional properties may be deduced about the CIFS, motivating the next definition:

\begin{definition}
A CIFS is said to be \emph{regular} if its pressure function has a unique zero. For the unique $\hat{t}\ge1$ with $P(\hat{t})=0$, we call the CIFS $\hat{t}$-regular.
\end{definition}

The following result characterizes the Hausdorff dimension of the invariant set of a $\hat{t}$-regular CIFS.

\begin{proposition}[{\cite[Theorem 7.4]{Mauldin1995InfiniteIteratedFunction}}]
\label{proposition:pressure_hausdorff_dimension}
Let $\Delta$ be the invariant set of a  $\hat{t}$-regular CIFS. Then, $\hausdorff{\Delta} = \hat{t}$.
\end{proposition}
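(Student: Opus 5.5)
This is Mauldin--Urbański's Theorem 7.4: for a $\hat t$-regular CIFS, $\hausdorff{\Delta}=\hat t$. The plan is to prove the upper bound $\hausdorff{\Delta}\le\hat t$ and the lower bound $\hausdorff{\Delta}\ge\hat t$ separately. Both rest on the Bounded Distortion Property \ref{condition:bdp} and the properties of the pressure in Proposition \ref{proposition:pressure_function}.

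\emph{Upper bound.} Fix $t>\hat t$. Since $P$ is strictly decreasing on $[\zeta,+\infty)$ and $P(\hat t)=0$, we have $P(t)<0$. Hence $P_k(t)\le e^{kP(t)/2}$ for all large $k$, and $P_k(t)\to 0$. For each $k$, the sets $f_\eta(K)$ with $\eta\in I^k$ cover $\Delta$. By the mean value inequality (using connectedness of $K$ from \ref{condition:osc} together with \ref{condition:bdp}), each has diameter at most $\opname{diam}(K)\,\norm{Df_\eta}_\infty$, and this is at most $s^k\opname{diam}(K)\to0$ by \ref{condition:unifcontract}. Therefore
\[
\mathcal H^t_{\delta_k}(\Delta)\le \opname{diam}(K)^t P_k(t)\to 0 .
\]
So $\mathcal H^t(\Delta)=0$, and thus $\hausdorff{\Delta}\le t$. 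Letting $t\downarrow\hat t$ gives $\hausdorff{\Delta}\le\hat t$.

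\emph{Lower bound.} This is the main obstacle; the standard route goes through a $\hat t$-conformal measure. First construct a Borel probability measure $m$ on $\Delta$ with
\[
m(f_\eta(A))=\int_A \norm{Df_\eta}^{\hat t}\,dm
\]
for every finite word $\eta$, and with $m(f_i(K)\cap f_j(K))=0$ for $i\ne j$. I would obtain $m$ by first truncating to finite subsystems $I_n=\{1,\dots,n\}$. Their pressures $P^{(n)}$ increase to $P$, with zeros $t_n\uparrow\hat t$ by continuity of $P$ on $\cal P$. For each finite subsystem, the Perron--Frobenius operator
\[
\mathcal L_{t}g(x)=\sum_i \norm{Df_i(x)}^{t}\,g(f_i(x))
\]
yields a conformal measure via Schauder--Tychonoff applied to the dual operator. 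One then takes a weak$^*$ limit of these measures. Tightness comes from the finiteness of $P_1(\hat t)$ near the zero (regularity): the tail $\sum_{i>n}\norm{Df_i}^{\hat t}_\infty$ is small, which prevents mass from escaping to the accumulation points of $\{f_i(K)\}$.

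With $m$ in hand, take $x=\proj(\omega)$ and $r$ small, and choose $k$ with $\norm{Df_{\omega|_k}}\asymp r$. By conformality and \ref{condition:bdp},
\[
m(f_{\omega|_k}(K))\asymp\norm{Df_{\omega|_k}}^{\hat t}.
\]
The Open Set Condition, the cone condition \ref{condition:cone_alt} and a volume-counting argument then bound the number of cylinders of comparable size that meet $B(x,r)$. This gives $m(B(x,r))\lesssim r^{\hat t}$ along a subsequence of scales, for $m$-a.e.\ $x$ (or $\liminf$ control via a Frostman-type lemma). The mass distribution principle then yields $\hausdorff{\Delta}\ge\hat t$.

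An alternative lower bound avoids constructing the measure altogether. It uses the finite subsystems $\Delta_n\subset\Delta$, whose dimension equals $t_n$ by the classical finite-CIFS (Bowen) formula, and then lets $t_n\uparrow\hat t$. I would present this as the primary route, since it only needs continuity and monotonicity of the pressure from Proposition \ref{proposition:pressure_function}, plus the convergence $P^{(n)}(t)\to P(t)$. That convergence follows from approximating $P_k$ by finite sums and using subadditivity (Lemma \ref{lemma:subadditivity}).
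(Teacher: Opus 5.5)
The paper gives no proof of this statement; it simply cites Mauldin's Theorem~7.4. Your upper bound is correct. For non-convex $K$ the diameter estimate needs a constant $D=D(K,V,M)$ in place of $\operatorname{diam}(K)$, but that is harmless, and here $K=[-1,1]$ anyway.

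\textbf{The gap.} Your primary lower-bound route rests on $t_n\uparrow\hat t$, i.e.\ on $P^{(n)}(t)$ being eventually positive for every $t<\hat t$. You derive $P^{(n)}(t)\to P(t)$ from monotone approximation of each $P_k$ plus subadditivity, and that justification fails. Lemma~\ref{lemma:subadditivity} only gives $P^{(n)}(t)=\inf_k\frac1k\log P^{(n)}_k(t)$, which bounds $P^{(n)}(t)$ from above. On that basis alone, $\lim_n$ does not commute with $\inf_k$. For example, $b^{(n)}_k=\min\{k,n\}$ is subadditive in $k$ and increases to $b_k=k$. Yet $\inf_k b^{(n)}_k/k=0$ for every $n$, while $\inf_k b_k/k=1$.

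\textbf{The fix.} You need a lower bound that is uniform in $n$, and it comes from \ref{condition:bdp}, not from subadditivity. By conformality, $\norm{Df_{\omega\eta}}_\infty\ge M^{-1}\norm{Df_\omega}_\infty\norm{Df_\eta}_\infty$. Hence $P^{(n)}_{k+l}(t)\ge M^{-t}P^{(n)}_k(t)P^{(n)}_l(t)$, and so
\[
P^{(n)}(t)\ \ge\ \frac1k\bigl(\log P^{(n)}_k(t)-t\log M\bigr)\qquad\text{for every }k\ge1 .
\]
Equivalently, apply Proposition~\ref{proposition:pressure_bounds} to the $k$-th iterate of the truncated system, which has the same constant $M$. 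Now take $t<\hat t$. Then $P(t)\in(0,+\infty]$, either by strict monotonicity on $[\zeta,+\infty)$ or because $P_1(t)=+\infty$. So $P_k(t)>M^t$ for some $k$. Since $P^{(n)}_k(t)\uparrow P_k(t)$ by monotone convergence, also $P^{(n)}_k(t)>M^t$ for large $n$. This gives $P^{(n)}(t)>0$, hence $t_n>t$, and Bowen's formula yields $\hausdorff{\Delta}\ge t_n>t$. That closes the lower bound.

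\textbf{The measure route.} Your secondary, measure-theoretic route should be dropped or substantially reworked, for three reasons.
\begin{enumerate}
\item Weak$^*$ compactness on the compact set $K$ is automatic. The real difficulty is that a limit of the finite-system conformal measures might charge $\overline{\Delta}\setminus\Delta$ (the set $Z$ in this paper) or fail the conformality identity.
\item In an infinite system a cylinder can be arbitrarily smaller than its parent. So counting ``cylinders of comparable size'' does not bound $m(B(x,r))$.
\item A bound along a subsequence of scales controls only the upper local dimension. That does not feed the mass distribution principle for Hausdorff dimension.
\end{enumerate}
Your claim there that $t_n\uparrow\hat t$ follows ``by continuity of $P$'' has the same convergence problem as above. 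Once repaired, your finite-subsystem argument is the standard Mauldin--Urba\'{n}ski proof that $\hausdorff{\Delta}=\sup_n t_n=\hat t$. It is then a self-contained substitute for the paper's citation.
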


\section{The dynamics of infinite-piecewise expanding maps via CIFS}
\label{section:dynamics_via_cifs}

As discussed in Section \ref{section:introduction}, each one of the functions $\psi_J$ is a contraction, with a uniform contraction constant $s < 1$. This qualifies $\Psi := \set{\psi_J : [-1,1]\to J}_{J\in\mathcal{J}}$ as an IFS. It is, nonetheless, more than that:

\begin{theorem}
The set $\Psi$ is a CIFS.
\end{theorem}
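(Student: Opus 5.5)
We take $K=[-1,1]$, $d=1$, $I=\mathcal{J}$, $f_J=\psi_J$, and verify the conditions \ref{condition:injection}--\ref{condition:bdp_alt} in turn. Most of them follow directly from \ref{hypothesis:h0}--\ref{hypothesis:h2}. Condition \ref{condition:injection} holds because each $\psi_J=\vphi_J^{-1}$ is a diffeomorphism onto $J$. Condition \ref{condition:unifcontract} is exactly \ref{hypothesis:h1}.

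For the open set condition \ref{condition:osc}, note that $K=[-1,1]$ is connected. Since $\psi_J$ is a homeomorphism of $[-1,1]$ onto the closed interval $J\subset[-1,1]$, it maps $(-1,1)$ onto $J^\circ\subset(-1,1)$. Disjointness of the $F_k$ from \ref{hypothesis:h0} gives $\psi_{J}(K^\circ)\cap\psi_{J'}(K^\circ)=\emptyset$ for $J\ne J'$. Condition \ref{condition:cone_alt} is trivial in dimension one: $\partial K=\{\pm1\}$, and for $0<r<1$ the ratio $m_1(B_r(x)\cap(-1,1))/m_1(B_r(x))$ equals $1/2$.

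The conformal condition \ref{condition:conformal} is automatic in $d=1$, since every nonzero linear map $\R\to\R$ is a scalar times an isometry ($\pm\mathrm{id}$). The real work is producing an open $V\supset[-1,1]$ and $\mathcal C^{1,\varepsilon}$ diffeomorphic extensions $\psi_J^V$ on $V$ for which \ref{hypothesis:h2} still holds with uniform constants. This extension step is the main obstacle, because \ref{hypothesis:h2} is only stated on $[-1,1]$ and the paper only assumes $\vphi$ on $W$.

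We plan to handle it by extending each $\psi_J$ explicitly to $V=(-1-\delta,1+\delta)$, with $\delta>0$ small and fixed. On $[1,1+\delta)$ we set
\[
\psi_J^V(x)=\psi_J(1)+\psi_J'(1)(x-1),
\]
and we extend affinely in the same way at $-1$. The derivative of $\psi_J^V$ is then constant outside $[-1,1]$, so it is continuous, nonvanishing and of one sign. Hence $\psi_J^V$ is a diffeomorphism onto its image, and its derivative remains $\alpha$-Hölder. The Hölder seminorm is controlled by that on $[-1,1]$: for $x,y$ on opposite sides of $\pm1$, the difference $|(\psi_J^V)'(x)-(\psi_J^V)'(y)|$ reduces to a difference between points of $[-1,1]$ at distance at most $|x-y|$.

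Moreover $\inf_V|(\psi_J^V)'|=\inf_{[-1,1]}|\psi_J'|$ and $\sup_V|(\psi_J^V)'|=\sup_{[-1,1]}|\psi_J'|$. Therefore \ref{condition:bdp_alt} holds on $V$ with the same $L$ (enlarged to $\max\{L,1\}$) and the same $\alpha$. Here $\norm{(D\psi^V_J)^{-1}}_\infty^{-1}=\inf_V|(\psi_J^V)'|$. Contraction with rate $s$ also persists, since $|(\psi_J^V)'|\le s$ everywhere.

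The OSC requirement $f_i(K^\circ)\cap f_j(K^\circ)=\emptyset$ concerns only $K$, so the extensions need not keep the images disjoint. Should the intended definition of CIFS require the images $\psi_J^V(V)$ to lie in a common region or remain disjoint, we would use the isolating intervals $G_k\supset F_k$ from \ref{hypothesis:h0} to make that possible. In that case we shrink $\delta$ depending on $J$, or instead use a nonlinear cutoff; we expect the affine extension to suffice.

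Finally, \ref{condition:bdp} then follows from \ref{condition:bdp_alt} by \cite[Lemma 2.2]{MauldinUrbanski1996DimensionsMeasuresInfinite}, which completes the verification that $\Psi$ is a CIFS.
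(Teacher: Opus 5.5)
Your proposal is correct and follows essentially the same route as the paper. You take $K=[-1,1]$, check \ref{condition:injection}--\ref{condition:osc} and \ref{condition:cone_alt} directly, and extend each $\psi_J$ affinely to $V=(-1-\delta,1+\delta)$ with slopes $\psi_J'(\pm1)$, so that \ref{hypothesis:h2} transfers to \ref{condition:bdp_alt}. Your distance argument for points on opposite sides of $\pm1$ is actually a more careful justification of the H\"older bound than the paper's remark that the extended derivatives take values already attained on $[-1,1]$.

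The one step to add is the check, which the paper makes, that $\psi_J^V(V)\subset V$; you need it so that the compositions in \ref{condition:bdp} are defined on $V$. It follows in one line from $\psi_J(\pm1)\in[-1,1]$ and $|(\psi_J^V)'|\le s<1$. Your fallback of shrinking $\delta$ depending on $J$ would not be admissible, since $V$ must be common to all $J$, but it is not needed.
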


\begin{proof}
Conditions \ref{condition:injection}, \ref{condition:unifcontract}, and \ref{condition:osc} follow directly from the defining properties of the class of infinite-piecewise expanding maps introduced in Section \ref{sec:def}. Moreover, a straightforward computation shows that Condition \ref{condition:cone_alt} is also satisfied on the set $[-1,1]$.

For Condition \ref{condition:conformal}, it is enough to take $V := (-1-\delta, 1+\delta)$, for some $\delta>0$, and extend each function $\psi_J$ to $\psi_J^V:V\to V$ by simply extending the functions on each side of the interval in a affine way with slopes equal to the lateral derivatives of $\psi_J$ at $-1$ and $1$. This is well-defined, since $\psi_J[-1,1]\subset[-1,1]$ and the extension is still a contraction, therefore $\psi_J^V(V)\subset V$. Besides, as $0 < \abs{(\psi_J^V)'(x)} < 1$, for all $x\in[-1,1]$, each function is Lipschitz continuous and, consequently, $\psi_J^V$ is also $\cal{C}^{1,\varepsilon}$. Finally, in the one-dimensional context, every diffeomorphism is trivially conformal, since the derivative at each point can be seen as just a number multiplying the identity map.

At last, for Condition \ref{condition:bdp_alt}, note that we have
\begin{gather*}
\norm{{\left(\psi'_{J}\right)}^{-1}}_{\infty}^{-1} = \left(\sup_{x\in[-1,1]}\abs{(\psi'_J(x))^{-1}}\right)^{-1}  = \left(\dfrac{1}{\inf_{x\in[-1,1]}\abs{\psi_J'(x)}}\right)^{-1} = \inf_{x\in[-1,1]}\abs{\psi_J'(x)},
\end{gather*}
and applying the condition \ref{hypothesis:h2}, we have
\begin{gather*}
\Bigabs{\abs{\psi'_{J }(x)} - \abs{\psi'_{J }(y)}} \le \abs{\psi'_{J }(x) - \psi'_{J }(y)} \le L\cdot \inf_{z\in[-1,1]}\abs{\psi'_{J }(z)}\cdot \abs{x-y}^{\alpha} \\ 
= L \cdot \norm{(\psi'_{J })^{-1}}_{\infty}^{-1} \cdot \abs{x-y}^{\alpha},
\end{gather*}
for some $L>1, \alpha>0$ and for all $x,y\in[-1,1]$. Then, extending the inequality for $\psi_J^V$ and $x,y\in V$, since by construction the derivatives $(\psi_J^V)'$ in $V$ have values already attained by $\psi_J'$ in $[-1,1]$, the IFS satisfies \ref{condition:bdp_alt}.

Therefore, $\Psi$ is an IFS satisfying all the conditions to be conformal.
\end{proof}

The behavior of the IFS is, naturally, linked to the dynamics of the map $\vphi$. In fact, the invariant set $\Lambda$ and the attractor set $\Delta$ are the same. To prove this, we need the following lemmas.

\begin{lemma}[{\cite[Theorem 3.1]{Mauldin1995InfiniteIteratedFunction}}]
\label{lemma:switch_union_intersection}
For a CIFS, the following relationship holds:
\[
\Delta = \bigcup_{\omega\in I^{\N}} \bigcap_{k\ge1}f_{\omega|_k}(K) = \bigcap_{k\ge1}\bigcup_{\eta\in I^k}f_{\eta}(K).
\]
\end{lemma}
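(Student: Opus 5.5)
We must show $\Delta=\bigcap_{k\ge1}\bigcup_{\eta\in I^k}f_\eta(K)$. Write $\Delta_k:=\bigcup_{\eta\in I^k}f_\eta(K)$. The plan is a two-inclusion argument: the inclusion $\subseteq$ is immediate, and the reverse inclusion is where the infinite alphabet matters.

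\emph{Inclusion $\subseteq$.} If $x=\proj(\omega)$, then for every $k$ we have $x\in f_{\omega|_k}(K)\subseteq \Delta_k$. Hence $x\in\bigcap_k \Delta_k$.

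\emph{Inclusion $\supseteq$.} Take $x\in\bigcap_k\Delta_k$. For each $k$, choose $\eta^{(k)}\in I^k$ with $x\in f_{\eta^{(k)}}(K)$. We want a single infinite word $\omega$ whose prefixes all satisfy $x\in f_{\omega|_k}(K)$. Then $x\in\bigcap_k f_{\omega|_k}(K)=\{\proj(\omega)\}$, and so $x\in\Delta$.

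To build $\omega$, we use injectivity \ref{condition:injection} together with the open set condition \ref{condition:osc} to show that the choices are essentially forced. Suppose $x\in f_\eta(K)\cap f_{\eta'}(K)$ with $\eta\in I^k$, $\eta'\in I^l$, $k\le l$. We claim $\eta$ is a prefix of $\eta'$, at least when $x$ is not a boundary image point. Indeed, if the words first differ at position $j$, then $x$ lies in $f_{\eta|_{j-1}}\bigl(f_{\eta_j}(K)\cap f_{\eta'_j}(K)\bigr)$, using injectivity of $f_{\eta|_{j-1}}$. By \ref{condition:osc}, $f_{\eta_j}(K^\circ)\cap f_{\eta'_j}(K^\circ)=\emptyset$, so the intersection $f_{\eta_j}(K)\cap f_{\eta'_j}(K)$ lies in boundaries.

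\emph{Main obstacle: the boundary points.} The boundary points are exactly where nonuniqueness can occur, so the trees of admissible words need not be a single chain. I would handle this with a König-lemma type compactness argument on the tree $T$ of finite words $\eta$ with $x\in f_\eta(K)$. This tree is closed under prefixes, since $f_{\eta}(K)\subseteq f_{\eta|_j}(K)$ because $f_i(K)\subseteq K$, and it has nodes at every level.

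König's lemma, however, needs finite branching, and here $I$ is infinite. So the key step is to show that each node has only finitely many children in $T$, or at least that some infinite branch exists. For the children $\eta i$ of $\eta$, the sets $f_{\eta i}(K)$ for different $i$ have disjoint interiors. Moreover, $x$ belongs to $f_{\eta i}(K)$ only if $f_\eta^{-1}(x)\in f_i(K)$. The points of $K$ lying in infinitely many $f_i(K)$ must be boundary points shared by infinitely many images. In the one-dimensional setting of the paper ($K=[-1,1]$ with the isolation hypothesis \ref{hypothesis:h0}), each point lies in at most one $f_i(K)$, so branching is at most one and the tree is a single chain. This yields $\omega$ directly.

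For a general CIFS, I would instead argue through Mauldin--Urbański's result that the number of $f_i(K)$ meeting a given point is uniformly bounded, using the cone condition \ref{condition:cone_alt} and bounded distortion \ref{condition:bdp}. This is the step I expect to be hardest. Once finite branching is established, König's lemma gives an infinite path $\omega$ in $T$, and the proof concludes.
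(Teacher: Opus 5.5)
Your argument is correct, but it is not the paper's route, because the paper gives no proof: it imports the equality from Mauldin's Theorem 3.1. Your two-inclusion argument makes the content explicit.

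The inclusion $\Delta\subseteq\bigcap_k\Delta_k$ is trivial. The reverse inclusion reduces to finding an infinite branch in the prefix-closed tree $T$ of words $\eta$ with $x\in f_\eta(K)$. By injectivity of $f_\eta$, the children $\eta i$ of a node $\eta\in T$ correspond exactly to the indices $i$ with $f_\eta^{-1}(x)\in f_i(K)$. So everything hinges on how many first-level images can contain one point.

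\begin{itemize}
\item \emph{Paper's setting.} Where the paper actually uses the lemma (to get $\Lambda=\Delta$), (H0) makes the sets $f_i(K)=F_i$ pairwise disjoint. Then $T$ is a single infinite chain and no compactness argument is needed. This part of your proof is fully self-contained, and it is the same uniqueness-of-itinerary fact that later makes $\proj$ invertible in Theorem A(e).
\item \emph{Any one-dimensional CIFS.} Even without (H0), branching is at most $2$, since the $f_i(K)$ are nondegenerate closed intervals with pairwise disjoint interiors. K\"onig's lemma then applies with no appeal to the literature.
\item \emph{General CIFS.} Here your proof is only as self-contained as the citation, since it rests on the Mauldin--Urba\'nski bounded-multiplicity lemma, which is where the cone condition (C5) and (BDP) enter. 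It does show exactly why the lemma is stated for CIFS rather than arbitrary infinite IFS: a point of $\bigcap_k\Delta_k\setminus\Delta$ can only arise from a tree with every level nonempty but no infinite branch, which by K\"onig forces some node with infinitely many children.
\end{itemize}

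The citation gives brevity and full generality. Your route gives transparency about which hypotheses matter and an elementary proof in the case the paper needs. Your preliminary prefix/boundary discussion can be dropped, since the tree argument alone carries the proof.
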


From what follows, it will help to define the auxiliary sets
\begin{equation}\label{equation:lambda_k_and_delta_k}
\Lambda_k := \bigcap_{1\le i\le k}\vphi^{-i}[-1,1],\text{ for }k\ge1,\,\, \text{ and }\,\, \Delta_k := \bigcup_{\eta\in \cal{J}^k}\psi_{\eta}[-1,1],\text{ for } k\ge 1.
\end{equation}

Note that
\begin{equation}\label{equation:lambda_k_is_nested}
\Lambda_{k+1} \subset \Lambda_k \quad\text{ and }\quad \Delta_{k+1} \subset \Delta_k.
\end{equation}

Also, we have the recursive property
\begin{equation}\label{equation:recursive_lambda_k}
\Delta_{k+1} = \bigcup_{\eta\in\cal{J}^{k+1}}\psi_\eta[-1,1] = \bigcup_{J\in\cal{J}}\psi_J(\Delta_k), \quad \text{for any $k\ge1$}.
\end{equation}

\begin{lemma}\label{lemma:lambda_k_equals_delta_k}
We have that $\Lambda_k = \Delta_k$, for all $k\ge1$.
\end{lemma}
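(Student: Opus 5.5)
Before starting the induction, note that $\Lambda_k$ must be read with the implicit requirement that each iterate is defined. That is, $\vphi^{-i}[-1,1]$ is the set of points $w$ for which $w,\vphi(w),\dots,\vphi^{i-1}(w)\in W$. Since $\vphi$ maps $W$ into $[-1,1]$, we have
\[
\Lambda_k=\set{w\in[-1,1] : \vphi^{j}(w)\in W \text{ for } 0\le j\le k-1}.
\]
The plan is to prove $\Lambda_k=\Delta_k$ by induction on $k$, using the recursion \eqref{equation:recursive_lambda_k} for $\Delta_k$ together with an analogous recursion for $\Lambda_k$, namely
\[
\Lambda_{k+1}=\bigcup_{J\in\cal{J}}\psi_J(\Lambda_k).
\]

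\emph{Base case.} For $k=1$ we have $\Lambda_1=\vphi^{-1}[-1,1]=W=\bigcup_{J\in\cal J}J$. By \ref{hypothesis:h0}, each $\vphi_J:J\to[-1,1]$ is a bijection, so $\psi_J[-1,1]=J$. Hence $\Delta_1=\bigcup_J\psi_J[-1,1]=\bigcup_J J=W=\Lambda_1$.

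\emph{Recursion for $\Lambda_k$.} Let $w\in\Lambda_{k+1}$. Then $w\in W$, so $w\in J$ for a unique $J\in\cal J$, since the $F_k$ are pairwise disjoint. Set $y:=\vphi(w)=\vphi_J(w)\in[-1,1]$. Its iterates satisfy $\vphi^{j}(y)=\vphi^{j+1}(w)\in W$ for $0\le j\le k-1$, so $y\in\Lambda_k$, and $w=\psi_J(y)\in\psi_J(\Lambda_k)$. Conversely, let $w=\psi_J(y)$ with $y\in\Lambda_k$. Then $w\in J\subset W$ and $\vphi(w)=\vphi_J(\psi_J(y))=y$. Here we use that $\vphi$ restricted to $J$ is exactly $\vphi_J$, which follows from disjointness. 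So $\vphi^{j+1}(w)=\vphi^j(y)\in W$ for $j\le k-1$, and therefore $w\in\Lambda_{k+1}$.

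\emph{Inductive step.} Assuming $\Lambda_k=\Delta_k$, we get
\[
\Lambda_{k+1}=\bigcup_J\psi_J(\Lambda_k)=\bigcup_J\psi_J(\Delta_k)=\Delta_{k+1},
\]
by \eqref{equation:recursive_lambda_k}.

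\emph{Main obstacle.} The only real obstacle is bookkeeping about domains. The definition $\bigcap_{1\le i\le k}\vphi^{-i}[-1,1]$ must be interpreted so that $\vphi^{-i}[-1,1]$ consists of points whose first $i$ iterates are defined. The recursion step also relies on disjointness of the intervals, so that $\vphi$ agrees with $\vphi_J$ on $J$ and $\vphi\circ\psi_J=\mathrm{id}$; this is what allows the branch $J$ to be identified uniquely. Once this convention is fixed, each inclusion is a direct check, as above.
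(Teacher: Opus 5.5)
Your argument is correct and is essentially the paper's proof. The paper unwinds all $k$ iterates at once through the chain of equivalences $x\in\Lambda_k \iff \varphi_{J_k}\circ\cdots\circ\varphi_{J_1}(x)=y\in[-1,1] \iff x=\psi_{J_1}\circ\cdots\circ\psi_{J_k}(y)$, whereas you peel off one branch at a time via $\Lambda_{k+1}=\bigcup_{J\in\mathcal{J}}\psi_J(\Lambda_k)$ and induct, which usefully makes the domain convention for $\varphi^{-i}[-1,1]$ explicit; one small correction is that $\varphi\circ\psi_J=\mathrm{id}$ holds simply because $\varphi_J:=\varphi|_J$ by definition in (H0), so disjointness of the intervals is not what is being used there.
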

\begin{proof}
Indeed
\begin{equation*}
\Lambda_1 = \vphi^{-1}[-1,1] = \cup_{J\in\cal{J}}\psi_J[-1,1] = \Delta_1,
\end{equation*}
(which also means $\Lambda_1 = \Delta_1 = W$), and also
\begin{align*}
&\phantom{\iff}\ \ x\in \Lambda_k \\
&\iff \vphi_{J_k} \circ \vphi_{J_{k-1}} \circ \ldots \circ \vphi_{J_1}(x) = y, \text{ for some }y\in [-1,1]   \text{ and } J_i\in\cal{J}\text{ for }1\le i\le k\\
&\iff \psi_{J _k}^{-1} \circ \psi_{J_{k-1}}^{-1}\circ \ldots \circ \psi_{J_1}^{-1}(x) = y, \text{ for some }y\in[-1,1]  \text{ and } J_i\in\cal{J}\text{ for }1\le i\le k \\
&\iff x = \psi_{J_1} \circ \ldots \circ \psi_{J _{k-1}} \circ \psi_{J _k}(y), \text{ for some }y\in[-1,1]  \text{ and } J_i\in\cal{J}\text{ for }1\le i\le k \\
&\iff x \in \psi_{\eta}[-1,1], \text{ for some }\eta\in \cal{J}^k  \\
&\iff x\in \Delta_k.
\end{align*}

\end{proof}

Now we can prove the following proposition.
\begin{proposition}\label{proposition:lambda_equals_delta}
The invariant set $\Lambda$ of $\vphi$ and the attractor set $\Delta$ of the CIFS $\Psi$ are the same.
\end{proposition}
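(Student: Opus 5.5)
The plan is to reduce the statement to Lemma \ref{lemma:lambda_k_equals_delta_k} and Lemma \ref{lemma:switch_union_intersection} by writing both sets as intersections of the level-$k$ sets. On the dynamical side, $\Lambda = \bigcap_{k\ge0}\vphi^{-k}[-1,1]$. The term $k=0$ is $[-1,1]$ itself, and by definition $\Lambda\subset W$, so only the terms $k\ge1$ matter. Moreover, $w\in\vphi^{-k}[-1,1]$ already requires $\vphi^{j}(w)\in W$ for $0\le j<k$, since $\vphi$ is only defined on $W$. Hence
\[
\Lambda = \bigcap_{k\ge1}\vphi^{-k}[-1,1] = \bigcap_{k\ge1}\Lambda_k,
\]
where the last equality holds because $\Lambda_k=\bigcap_{1\le i\le k}\vphi^{-i}[-1,1]$, so intersecting the $\Lambda_k$ over all $k$ gives the same set as intersecting the $\vphi^{-k}[-1,1]$.

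On the IFS side, Lemma \ref{lemma:switch_union_intersection} with $K=[-1,1]$ and $f_\eta=\psi_\eta$ gives
\[
\Delta=\bigcap_{k\ge1}\bigcup_{\eta\in\cal{J}^k}\psi_\eta[-1,1]=\bigcap_{k\ge1}\Delta_k.
\]
Lemma \ref{lemma:lambda_k_equals_delta_k} gives $\Lambda_k=\Delta_k$ for every $k$, so the two intersections coincide and $\Lambda=\Delta$.

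The only real subtlety is the one Lemma \ref{lemma:switch_union_intersection} settles: for infinitely many maps, it is not obvious that a point lying in some level-$k$ cylinder for every $k$ lies on a single infinite coding. To keep the argument self-contained, I would also check this directly, which in this setting is easy. Given $x\in\bigcap_k\Delta_k$, the intervals $\psi_J[-1,1]=J$ are pairwise disjoint by \ref{hypothesis:h0}, so $x$ lies in a unique $J_1$. For level $k$, write $x=\psi_\eta(y)$ with $\eta\in\cal{J}^k$; the first letter of $\eta$ must then be $J_1$. Applying $\vphi_{J_1}$ and inducting, the level-$k$ words for $x$ are forced to be consistent, so they determine a single sequence $\omega=(J_1,J_2,\dots)$ with $x\in\psi_{\omega|_k}[-1,1]$ for all $k$. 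Uniform contraction \ref{hypothesis:h1} then gives $x=\proj(\omega)\in\Delta$. The reverse inclusion $\Delta\subset\bigcap_k\Delta_k$ is immediate from the definition of $\proj$.
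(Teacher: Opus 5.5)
Your proof is correct and follows the paper's argument: you write $\Lambda=\bigcap_{k\ge1}\Lambda_k$ and $\Delta=\bigcap_{k\ge1}\Delta_k$ (the latter via Lemma~\ref{lemma:switch_union_intersection}), then conclude from $\Lambda_k=\Delta_k$. Your extra direct check of the union--intersection swap is also valid. It uses the pairwise disjointness of the intervals in $\mathcal{J}$ to force consistent codings, and so makes the argument self-contained in this one-dimensional setting without relying on the cited result.
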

\begin{proof}
Note that the invariant set $\Lambda$ of $\vphi$, as expressed in \eqref{equation:invariant_set}, can be written as a nested intersection of the sets given in \eqref{equation:lambda_k_and_delta_k}:
\begin{equation}\label{equation:intersection_lambda_k}
\Lambda =\bigcap_{k\ge1}\vphi^{-k}[-1,1]=\bigcap_{k\ge1}\Lambda_k.
\end{equation}
Furthermore, by Lemma \ref{lemma:switch_union_intersection}, the attractor set of  the CIFS $\Psi$ can by viewed as
\begin{equation}\label{equation:intersection_delta_k}
\Delta = \bigcap_{k\ge1}\bigcup_{\eta\in \cal{J}^k}\psi_{\eta}[-1,1] = \bigcap_{k\ge1}\Delta_k.
\end{equation}

Thus, by applying Lemma \ref{lemma:lambda_k_equals_delta_k} in \eqref{equation:intersection_lambda_k} and \eqref{equation:intersection_delta_k}, we may finally conclude that $\Lambda = \Delta$.
\end{proof}

\section{The Invariant Set $\Lambda$}\label{subsection:closure_of_lambda}

An important object of our results is the closure of the invariant set, so now we turn our attention to $\ov{\Lambda}$, using the characterization $\Lambda=\Delta=\bigcap_{k\ge 1} \Delta_{k}$ given by Proposition \ref{proposition:lambda_equals_delta} to analyze its structure and properties. 

Since we aim to understand the closure of the invariant set, a first step is to study the closure of $W = \Delta_1$ and the effects of $\Psi$ on it. For that, we define
\begin{equation}\label{equation:Z_k}
Z_k := \left(\bigcup_{j=1}^{k-1}\bigcup_{\eta\in\cal{J}^j}\psi_{\eta}(Z_1)\right)\cup Z_1 = \left(\bigcup_{j=1}^{k-1}\vphi^{-j}(Z_1)\right) \cup Z_1,\text{ for } k\ge1,
\end{equation}
where $Z_1$ is defined as in \eqref{equation:Z_1}. Note that
\begin{equation}\label{equation:Z_k_nested}
Z_{k+1} \supset Z_k, \text{ for all } k\ge 1
\end{equation}
Also, they satisfy the recursive property
\begin{equation}\label{equation:recursive_Z_k}
Z_{k+1} = \left(\bigcup_{J\in\cal{J}}\psi_J(Z_k)\right)\cup Z_1, \quad \text{for any $k\ge1$}.
\end{equation}

We can also characterize the set $Z$ in \eqref{equation:Z} as
\begin{equation}\label{equation:Z_alt}
Z := \bigcup_{k\ge0}\vphi^{-k}(Z_1) = \bigcup_{k\ge 1} Z_k.
\end{equation}

We finally define
\[
\Delta_Z := \bigcap_{k\ge1}\left(\Delta_k \cup Z_k\right).
\]

Clearly, since $\Delta_k \subset \Delta_k \cup Z_k$, we must have $\Delta \subset \Delta_Z$. Now we are going to prove that $\Delta_Z$ is compact and $\ov{\Delta}=\Delta_Z$, and for that, we will need a property of $Z_1$ that arises from the structure of the sequence $\set{F_n}_{n\ge1}$, so we state the following lemma.

\begin{lemma}
\label{lemma:equiv_isolated_and_cofinite}
The following statements are equivalent:
\begin{enumerate}
	\item Each $F_i$ is isolated from the others;
	\item $Z_1$ is a non-empty compact set and, for any open set $V_{Z_1}$ containing $Z_1$, there are only finitely many intervals $J\in\cal{J}$ such that $J\not\subset V_{Z_1}$.
\end{enumerate}
\end{lemma}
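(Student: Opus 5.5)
We prove the two implications separately. Throughout we use that $W=\bigcup_k F_k$ has infinitely many pairwise disjoint closed intervals inside $[-1,1]$, and that by \eqref{equation:convergence_hypothesis}, or simply disjointness inside a bounded interval, the lengths satisfy $\abs{F_k}\to0$.

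\emph{(1)$\Rightarrow$(2).} First, $Z_1=\ov{W}\setminus W$ is non-empty. Pick points $x_k\in F_k$. By compactness of $[-1,1]$ a subsequence converges to some $x$. If $x\in F_j$, then $x\in G_j$, which is open, so $G_j$ contains $x_{k}$ for infinitely many $k\ne j$. This contradicts $x_k\in F_k\subset G_k$ and $G_k\cap G_j=\emptyset$. Hence $x\in\ov W\setminus W$.

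Next, $Z_1$ is closed. Its complement in $\ov W$ is $W=\bigcup_j F_j=\bigcup_j (G_j\cap\ov W)$, since $G_j\cap W=F_j$. So $W$ is relatively open in $\ov W$, and therefore $Z_1$ is closed in the compact set $\ov W$, hence compact.

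For the cofiniteness, let $V_{Z_1}\supset Z_1$ be open, and suppose infinitely many $F_{k}\not\subset V_{Z_1}$. For each such $k$ choose $x_k\in F_k\setminus V_{Z_1}$. A convergent subsequence has a limit $x\notin V_{Z_1}$, because the complement of $V_{Z_1}$ is closed. By the argument above, $x\notin W$, so $x\in Z_1\subset V_{Z_1}$, a contradiction.

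\emph{(2)$\Rightarrow$(1).} Fix $j$. We want an open interval $G_j\supset F_j$ meeting no other $F_k$, and then we need to make the $G_j$ pairwise disjoint.

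First observe that $F_j\cap Z_1=\emptyset$, since $Z_1\cap W=\emptyset$. Because $Z_1$ is compact and $F_j$ is compact and disjoint from it, $d:=\opname{dist}(F_j,Z_1)>0$. Let $U$ be the open $d/2$-neighbourhood of $Z_1$. Hypothesis (2) gives only finitely many $F_k\not\subset U$. The remaining $F_k$ lie in $U$, which is at distance at least $d/2$ from $F_j$. The finitely many exceptional $F_k$ with $k\ne j$ are compact and disjoint from $F_j$, so they lie at positive distance from $F_j$. Let $r_j>0$ be the minimum of $d/2$ and these finitely many distances. Then the open $r_j$-neighbourhood of $F_j$ meets no other $F_k$.

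Finally set $G_j:=B_{r_j/3}(F_j)$. If $G_i\cap G_j\ne\emptyset$ with $i\neq j$, then $\opname{dist}(F_i,F_j)<(r_i+r_j)/3\le \tfrac23\max(r_i,r_j)$. Suppose $r_j$ is the larger radius. Then $F_i$ meets the $r_j$-neighbourhood of $F_j$, which contradicts the choice of $r_j$. So the $G_j$ are pairwise disjoint open intervals, as required.

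The main obstacle is this last step: passing from "each $F_j$ has a neighbourhood avoiding the other intervals" to a pairwise disjoint family. The shrinking by a factor $1/3$ handles it.
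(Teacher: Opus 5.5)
Your proof is correct and follows the same route as the paper: (1)$\Rightarrow$(2) uses the same limit-point argument together with $Z_1=\overline{W}\setminus\bigcup_j G_j$, and (2)$\Rightarrow$(1) combines a neighbourhood of $Z_1$ at positive distance from $F_j$ with cofiniteness. Your direct version of the converse is more complete than the paper's contrapositive, which tacitly equates failure of isolation with some interval being accumulated by infinitely many others; your $r_j/3$ shrinking is exactly what justifies that step, and choosing $x_k\in F_k\setminus V_{Z_1}$ makes your cofiniteness argument cleaner than the paper's.
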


\begin{proof}
For the first implication, we recall that since each $F_i$ is isolated, there are open intervals $G_i\supset F_i$, with $G_i\cap G_j=\emptyset$ if $i\neq j$.
	
	First, taking a sequence $x_i\in F_i$, it must have a convergent subsequence, so let us write $x_{i_k} \to x$. Clearly, $x\in\ov{W}$, and since the intervals are isolated, $x\notin W$, otherwise there would be an accumulation of points near the interval where $x$ lies, contradicting the fact that they are isolated. So $Z_1\neq\emptyset$.
	
	We also note that $Z_1 := \ov{W}\setminus W = \ov{W}\setminus (\cup_i G_i)$. Indeed, if $x\in \overline W$ and $x\in G_i$, then $x$ cannot belong to $Z_1$, since if $x\in F_i$ then $x\in W$, and if $x\in G_i\setminus F_i$ there exists a neighborhood of $x$ disjoint from $W$, contradicting $x\in\overline W$. Also, $\ov{W}\setminus (\cup_i G_i)$ is the difference between a closed set and an open set, therefore it is closed and compact.
	
	Finally, take an open set $\cal{V}_{Z_1}$ of $Z_1$, and let us consider the index set $\cal{J}\setminus\cal{J}_{Z_1}$, where $\cal{J}_{Z_1} := \set{J\in\cal{J} : J\subset V_{Z_1},\text{for some } V_{Z_1}\in\cal{V}_{Z_1}}$. If there exists infinite intervals $J_{n_k}\in \cal{J}\setminus\cal{J}_{Z_1}$, then there must exists a convergent sequence $y_{n_k}\to y$, with $y_{n_k} \in F_{n_k}$. Again, the isolation of the intervals implies $y\notin W$, hence $y\in\ov{W}$ and $y\notin W$, or equivalently $y\in Z_1$. But that is an absurd since it must lie inside the open set $V_{Z_1}$, so it cannot be a limit point of intervals that are not contained in $V_{Z_1}$.

For the reciprocal statement the proof is by the contrapositive, so let us suppose that there is an accumulation interval, that is an interval for which any open set, there are infinitely many other intervals converging to it, say $\tilde{F}$; besides, let us assume that $Z_1$ is a non-empty compact set, and we will show that it must be the case that there exists an open set $V_{Z_1}$ such that the number of intervals $J\in\cal{J}$ not contained in $V_{Z_1}$ is infinite.
	
	Since $Z_1$ and $\tilde{F}$ are non-empty compact sets, we can cover $Z_1$ by an open set $V_{Z_1}$ with positive distance from $\tilde{F}$.
	
	Therefore, there must be an infinite number of intervals converging to $\tilde{F}$ (since it is an accumulation interval) that are not contained in the open set $V_{Z_1}$ (since it has positive distance from $\tilde{F}$).
\end{proof}

Now we are going to prove two propositions that taken together imply that each level set $Z_{k}$ is exactly the points outside $\Delta_k$ needed to ``close'' the level set $\Delta_k$.

\begin{proposition}\label{proposition:compacity}

The sets $\Delta_k \cup Z_k$, for $k\ge 1$, and $\Delta_Z$ are compact.
\end{proposition}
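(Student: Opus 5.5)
The plan is to show that $\Delta_k \cup Z_k$ is closed by induction on $k$. Since it lies in $[-1,1]$, it is then compact. The set $\Delta_Z$ is a countable intersection of compact sets, so it is compact once each level is.

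\emph{Base case $k=1$.} Here $\Delta_1 = W$ and $Z_1 = \ov{W}\setminus W$, so $\Delta_1\cup Z_1 = \ov{W}$, which is closed.

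\emph{Inductive step.} Assume $\Delta_k\cup Z_k$ is compact. By \eqref{equation:recursive_lambda_k} and \eqref{equation:recursive_Z_k},
\[
\Delta_{k+1}\cup Z_{k+1} = \left(\bigcup_{J\in\cal{J}}\psi_J(\Delta_k\cup Z_k)\right)\cup Z_1 .
\]
Set $A:=\Delta_k\cup Z_k$. Each $\psi_J(A)$ is compact, being a continuous image of a compact set, and it is contained in $J$. I will take a sequence $x_n$ in the right-hand side converging to some $x$, and show that $x$ lies in the right-hand side. There are two cases.
\begin{enumerate}
\item Infinitely many $x_n$ lie in a single $\psi_J(A)$, or in $Z_1$. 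Then $x$ belongs to that set, because the set is closed.
\item Otherwise, passing to a subsequence, $x_n\in\psi_{J_n}(A)\subset J_n$ with the intervals $J_n$ pairwise distinct.
\end{enumerate}

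In the second case I will apply Lemma \ref{lemma:equiv_isolated_and_cofinite}. For every $\epsilon>0$, only finitely many $J\in\cal{J}$ fail to lie in the $\epsilon$-neighbourhood of the compact set $Z_1$. Hence $\operatorname{dist}(x_n,Z_1)\to0$, and since $Z_1$ is closed, $x\in Z_1$. Alternatively, I can argue directly: $x_n\in W$ with the $J_n$ distinct, and the intervals are isolated, so $x\in\ov{W}\setminus W$. Either way $x$ lies in the union, which is therefore closed. This completes the induction.

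The only real obstacle is this step: controlling limits that come from infinitely many distinct branches. An infinite union of compact sets need not be closed, and it is exactly the isolation hypothesis \ref{hypothesis:h0}, packaged in Lemma \ref{lemma:equiv_isolated_and_cofinite}, that forces every such limit into $Z_1$. That set has been added back at each level, which is what makes the union closed. The remaining verifications (closedness of $\ov{W}$, compactness of $\psi_J(A)$, closedness of a countable intersection) are routine.
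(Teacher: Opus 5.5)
Your proof is correct and follows the paper's approach. Both use the same induction via the identity $\Delta_{k+1}\cup Z_{k+1}=\bigl(\bigcup_{J}\psi_J(\Delta_k\cup Z_k)\bigr)\cup Z_1$, and both use Lemma~\ref{lemma:equiv_isolated_and_cofinite} to force contributions from infinitely many distinct branches into $Z_1$; you phrase this as sequential closedness, where the paper extracts a finite subcover from an open cover, which is only a cosmetic difference.
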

\begin{proof}
The proof is by induction on $k$.

For $k=1$, we have that $\Delta_1\cup Z_1 = W\cup\left(\ov{W}\setminus W\right) = \cl{W}$, which is compact.

Now, for the inductive hypothesis, we assume that $\Delta_k \cup Z_k$ is compact, for some  $k\ge1$, to show that $\Delta_{k+1} \cup Z_{k+1}$ is compact too.

First, applying the properties in \eqref{equation:recursive_lambda_k} and \eqref{equation:recursive_Z_k}, we get that
\begin{align*}
\Delta_{k+1} \cup Z_{k+1} &= \left(\bigcup_{J\in\cal{J}}\psi_J(\Delta_k)\right) \cup \left(\bigcup_{J\in\cal{J}}\psi_J(Z_k)\right)\cup Z_1 \\
\\
&= \left(\bigcup_{J\in\cal{J}}\psi_J (\Delta_k\cup Z_k)\right) \cup Z_1.
\end{align*}

So let $\cal{V}$ be an open cover of $\Delta_{k+1} \cup Z_{k+1}$ with an open subcover $\cal{V}_{Z_1} \subset \cal{V}$ covering $Z_1$. Note that $\cal{V}_{Z_1}$ may be taken finite, since $Z_1$ is compact.

By Lemma \ref{lemma:equiv_isolated_and_cofinite}, the index set $\cal{J}\setminus\cal{J}_{Z_1}$ is finite, and the inductive hypothesis gives us that each one of $\psi_J(\Delta_{k} \cup Z_{k})$ is compact, therefore we must have that
\[
\bigcup_{J\in\cal{J}\setminus\cal{J}_{Z_1}}\psi_J(\Delta_{k} \cup Z_{k})
\]
is compact, being a finite union of compact sets and so having a finite subcover $\tilde{\cal{V}}\subset\cal{V}$.

Therefore  
$\tilde{\cal{V}}\cup\cal{V}_{Z_1}\subset\cal{V}$ is a finite subcover of $\Delta_{k+1} \cup Z_{k+1}$, so it is compact.

In addition, since it is an intersection of compact sets, $\Delta_Z$ is compact.
\end{proof}

\begin{proposition}\label{proposition:delta_k_disjoint_Z_k}
We have $\Delta_k\cap Z_k = \emptyset$, for all $k\ge1$.
\end{proposition}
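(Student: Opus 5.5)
I will prove $\Delta_k\cap Z_k=\emptyset$ by induction on $k$, using the recursive descriptions \eqref{equation:recursive_lambda_k} and \eqref{equation:recursive_Z_k}, together with the injectivity of each $\psi_J$ and the pairwise disjointness of the intervals $J\in\cal{J}$.

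For $k=1$ the claim is immediate. By Lemma \ref{lemma:lambda_k_equals_delta_k} we have $\Delta_1=W$, and $Z_1=\ov{W}\setminus W$ is disjoint from $W$ by definition.

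For the inductive step, assume $\Delta_k\cap Z_k=\emptyset$. By the recursive formulas,
\[
\Delta_{k+1}=\bigcup_{J\in\cal{J}}\psi_J(\Delta_k),\qquad Z_{k+1}=\Big(\bigcup_{J\in\cal{J}}\psi_J(Z_k)\Big)\cup Z_1 .
\]
Take $x\in\Delta_{k+1}$, so $x=\psi_J(y)$ for some $J\in\cal{J}$ and $y\in\Delta_k$. There are two things to rule out.

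First, $x\notin Z_1$. This holds because $x\in\psi_J[-1,1]=J\subset W$, and $W\cap Z_1=\emptyset$.

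Second, $x\notin\psi_{J'}(Z_k)$ for any $J'\in\cal{J}$. Suppose instead that $x=\psi_{J'}(z)$ with $z\in Z_k$. Since $\psi_{J'}(z)\in J'$ and $x\in J$, and the intervals of $\cal{J}$ are pairwise disjoint by \ref{hypothesis:h0}, we must have $J'=J$. Then $\psi_J(y)=\psi_J(z)$, and injectivity of $\psi_J$ (it is the inverse of the diffeomorphism $\vphi_J$) gives $y=z\in\Delta_k\cap Z_k$. This contradicts the induction hypothesis.

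The only subtle point is that $Z_k$ is a subset of $[-1,1]$, so that $\psi_J(Z_k)$ is defined and contained in $J$. This holds because $Z_1\subset\ov{W}\subset[-1,1]$, and each $\psi_\eta$ maps $[-1,1]$ into itself. With that noted, the induction closes, and I expect no genuine obstacle beyond this bookkeeping.
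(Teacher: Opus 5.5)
Your proof is correct and follows the same route as the paper: induction on $k$ via the recursive formulas \eqref{equation:recursive_lambda_k} and \eqref{equation:recursive_Z_k}, using $\psi_J(\Delta_k)\subset J\subset W$ to exclude $Z_1$, and injectivity of $\psi_J$ together with the induction hypothesis to exclude $\psi_J(Z_k)$. Your version is in fact slightly more careful, because you explicitly invoke the pairwise disjointness of the intervals in $\mathcal{J}$ to rule out the cross terms $\psi_J(\Delta_k)\cap\psi_{J'}(Z_k)$ with $J\neq J'$, which the paper's distributive expansion drops without comment.
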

\begin{proof}
The proof is by induction on $k$.

For the base case $k=1$, we simply observe that $\Delta_1\cap Z_1 = \Lambda_1\cap (\cl{W}\setminus W) = W\cap (\cl{W}\setminus W) = \emptyset$.

Now assume that for some $k\ge1$ we have $\Delta_k\cap Z_k=\emptyset$, and let us consider $\Delta_{k+1}\cap Z_{k+1}$. Then
\begin{align*}
\Delta_{k+1}\cap Z_{k+1} &= \left[\bigcup_{J\in\cal{J}}\psi_J(\Delta_k)\right]\cap\left[\left(\bigcup_{J\in\cal{J}}\psi_J(Z_k)\right)\cup Z_1\right] \\
&= \bigcup_{J\in\cal{J}}\Bigg[\Bigg(\psi_J(\Delta_k)\cap\psi_J(Z_k)\Bigg)\cup \Bigg(\psi_J(\Delta_k)\cap Z_1\Bigg)\Bigg]
\end{align*}

The fact that each $\psi_J$ is injective and the induction hypothesis give us that $\psi_J(\Delta_k)\cap\psi_J(Z_k) = \psi_J(\Delta_k\cap Z_k) = \psi_J(\emptyset) = \emptyset$, and since $\psi_J(\Delta_k)\subset J\subset W$, we must have $\psi_J(\Delta_k)\cap Z_1 =\emptyset$.
\end{proof}

As said before, Propositions \ref{proposition:compacity} and \ref{proposition:delta_k_disjoint_Z_k} mean that each $Z_{k}$ is exactly the collection of points needed to close $\Delta_k$, allowing one more characterization
\[
Z_k = \cl{\Lambda_k}\setminus\Lambda_k.
\]

\begin{proposition}
We have $\ov{\Delta} = \Delta_Z$.
\end{proposition}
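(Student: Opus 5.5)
We prove the two inclusions $\ov{\Delta}\subset\Delta_Z$ and $\Delta_Z\subset\ov{\Delta}$.

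\emph{The inclusion $\ov{\Delta}\subset\Delta_Z$.} This is immediate. We already observed that $\Delta\subset\Delta_Z$, and Proposition \ref{proposition:compacity} shows that $\Delta_Z$ is compact, hence closed. Therefore $\ov{\Delta}\subset\Delta_Z$.

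\emph{The inclusion $\Delta_Z\subset\ov{\Delta}$: reduction to $Z_k$.} Take $x\in\Delta_Z$. For every $k$ we have $x\in\Delta_k\cup Z_k$. We split into two cases.
\begin{itemize}
\item If $x\in\Delta_k$ for all $k$, then $x\in\bigcap_k\Delta_k=\Delta$ by Lemma \ref{lemma:switch_union_intersection}, and we are done.
\item Otherwise $x\notin\Delta_{k_0}$ for some $k_0$. By the nesting \eqref{equation:lambda_k_is_nested}, $x\notin\Delta_k$ for all $k\ge k_0$, so $x\in Z_k$ for $k\ge k_0$. In particular $x\in Z=\bigcup_k Z_k$.
\end{itemize}
It therefore suffices to prove $Z\subset\ov{\Delta}$.

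\emph{Key step: $Z\subset\ov{\Delta}$.} Using \eqref{equation:Z_k}, every point of $Z$ has the form $\psi_\eta(z)$ with $z\in Z_1$ and $\eta$ a finite word, possibly empty.

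First treat $z\in Z_1$. Since $z\in\ov{W}\setminus W$, there are points $w_n\in F_{j_n}$ with $w_n\to z$. The indices $j_n$ can be chosen so that $j_n\to\infty$, because $z$ lies outside each closed interval $F_j$ and hence has positive distance from it. The attractor meets every interval: $\psi_{F_j}(\Delta)\subset F_j\cap\Delta$ is nonempty. Pick $y_n\in F_{j_n}\cap\Delta$. Then $|y_n-z|\le|y_n-w_n|+|w_n-z|\le\operatorname{diam}F_{j_n}+|w_n-z|$.

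By the summability \eqref{equation:convergence_hypothesis}, and also by Lemma \ref{lemma:equiv_isolated_and_cofinite}, $\operatorname{diam}F_j\le 2\sup|\psi_{F_j}'|\to0$. Hence $y_n\to z$, which gives $Z_1\subset\ov{\Delta}$.

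Now treat $\psi_\eta(z)$ with $\eta$ nonempty. The map $\psi_\eta$ is continuous, and by invariance $\psi_\eta(\Delta)\subset\Delta$. Therefore $\psi_\eta(\ov{\Delta})\subset\ov{\psi_\eta(\Delta)}\subset\ov{\Delta}$. Applying this to $z\in Z_1\subset\ov{\Delta}$ gives $\psi_\eta(z)\in\ov{\Delta}$, and so $Z\subset\ov{\Delta}$.

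\emph{Main obstacle.} The delicate point is the approximation of $z\in Z_1$ by points of $\Delta$. A nearby point of $W$ alone does not suffice; we need the intervals $F_{j_n}$ containing those points to shrink. This rests on $\operatorname{diam}F_j\to0$, which follows from the convergent series of derivatives. We also need $j_n\to\infty$, which holds because $z$ is not in any $F_j$ and each $F_j$ is closed.
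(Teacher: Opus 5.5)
Your proof is correct and follows essentially the same route as the paper. You obtain $\overline{\Delta}\subset\Delta_Z$ from the compactness of $\Delta_Z$ (Proposition \ref{proposition:compacity}), and the reverse inclusion by approximating each $z\in Z_1$ with points of $\Delta\cap F_{j_n}$ and then transporting along $\psi_\eta$ via continuity and $\psi_\eta(\Delta)\subset\Delta$. Your version is a bit more careful than the paper's, because you explicitly justify $j_n\to\infty$ and $\operatorname{diam}F_{j_n}\to 0$, which the paper uses only implicitly; the latter also follows directly from the $F_j$ being disjoint subintervals of $[-1,1]$.
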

\begin{proof}
First, given that $\Delta\subset\Delta_Z$ and $\Delta_Z$ is compact by Proposition \ref{proposition:compacity}, we must have $\ov{\Delta}\subset\Delta_Z$.

Thus it remains to prove that $\Delta_Z\subset\ov{\Delta}$. For that, let us consider $x\in\Delta_Z = \bigcap_{k\ge1}\left(\Delta_k \cup Z_{k} \right)$, meaning that $x\in \Delta_k \cup Z_{k}$  for all $k\ge1$.

If $x\in \Delta_k$, for all $k\ge1$, then we actually have that $x\in\Delta\subset\ov{\Delta}$ and there is nothing to prove.

Otherwise, let us assume $x\in Z_{\kappa}$, for some minimal $1\le \kappa\le k-1$.

For the case $\kappa=1$, we have $x\in Z_1 = \ov{W}\setminus W$. This implies that there are a sequence of indexes $i_n$ with $x = \lim_{n\to +\infty}x_{i_n}$, for any $x_{i_n}\in F_{i_n}$. Then by taking a sequence $\set{y_{i_n} : y_i\in\Delta\cap F_{i_n}}$ we deduce that $x\in\ov{\Delta}$.

Now let us assume that $\kappa\ge 2$, which is to say that there exists $\ov\eta = (J_1,\ldots,J_{\ell})\in\cal{J}^{\ell}$, for some $2\le\ell\le \kappa$, such that $x \in \psi_{\ov\eta}(\tilde{x}) = \psi_{(J_1,\ldots,J_{\ell})}(\tilde{x})$, with $\tilde{x}\in Z_1$. Take $\set{x_i}_{i\ge1}\subset\Delta$ a sequence converging to the point $\tilde{x}$. By the continuity of $\psi_{\ov\eta}$, we deduce that
\[
x = \psi_{\ov\eta}(\tilde{x}) = \psi_{\ov\eta}\left(\lim_{i\to +\infty}x_i\right) = \lim_{i\to +\infty}\psi_{\ov\eta}(x_i),
\]
and given that $\Delta$ is $\vphi$-invariant, we conclude that $x$ the limit of a sequence of elements of $\Delta$, therefore $x\in\ov{\Delta}$.
\end{proof}

\section{Proofs of the main result}
\label{section:proofs}

Since many of the results concern the properties of CIFS, Hausdorff dimension and Lebesgue measure, we present some of their properties that we are going to use.

\begin{proposition}[{\cite[Theorem 2(2)]{Schleicher2007HausdorffDimensionIts}}]\label{proposition:stability}
If $\set{S_k}_{k\ge1}$ is a countable collection of sets, then we have $\hausdorff{\bigcup_{k\ge1}S_k} = \sup_{k\ge1}\set{\hausdorff{S_k}}$.
\end{proposition}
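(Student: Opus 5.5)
This is countable stability of Hausdorff dimension: for countably many sets $S_k$, $\hausdorff{\bigcup_{k\ge1}S_k}=\sup_{k\ge1}\hausdorff{S_k}$. The plan is to prove the two inequalities separately, working directly from the definition of the $s$-dimensional Hausdorff measure $\mathcal{H}^s$ and the characterization $\hausdorff{A}=\inf\{s\ge0 : \mathcal{H}^s(A)=0\}=\sup\{s : \mathcal{H}^s(A)=+\infty\}$.

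The lower bound $\hausdorff{\bigcup_k S_k}\ge\sup_k\hausdorff{S_k}$ comes from monotonicity. If $A\subset B$, every $\delta$-cover of $B$ is a $\delta$-cover of $A$, so $\mathcal{H}^s_\delta(A)\le\mathcal{H}^s_\delta(B)$ for every $\delta>0$. Letting $\delta\to0$ gives $\mathcal{H}^s(A)\le\mathcal{H}^s(B)$, and hence $\hausdorff{A}\le\hausdorff{B}$. Applying this with $A=S_k$ and $B=\bigcup_j S_j$ for each $k$, then taking the supremum over $k$, gives the inequality.

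For the upper bound, set $\sigma:=\sup_k\hausdorff{S_k}$. If $\sigma=+\infty$ there is nothing to prove, so assume $\sigma<+\infty$ and fix $s>\sigma$. For each $k$ we have $s>\hausdorff{S_k}$, so $\mathcal{H}^s(S_k)=0$. Since $\mathcal{H}^s$ is an outer measure, it is countably subadditive, and therefore
\[
\mathcal{H}^s\Big(\bigcup_{k\ge1}S_k\Big)\le\sum_{k\ge1}\mathcal{H}^s(S_k)=0.
\]
Thus $\hausdorff{\bigcup_k S_k}\le s$, and letting $s\downarrow\sigma$ finishes the proof.

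The only step with any content is the countable subadditivity of $\mathcal{H}^s$, which the upper bound relies on. If it is not taken as standard, I would prove it at the level of $\mathcal{H}^s_\delta$. Given $\varepsilon>0$, choose for each $k$ a $\delta$-cover $\{U_{k,i}\}_i$ of $S_k$ with $\sum_i(\operatorname{diam}U_{k,i})^s\le\mathcal{H}^s_\delta(S_k)+\varepsilon 2^{-k}$. The family $\{U_{k,i}\}_{k,i}$ is a countable $\delta$-cover of the union, so
\[
\mathcal{H}^s_\delta\Big(\bigcup_k S_k\Big)\le\sum_k\mathcal{H}^s_\delta(S_k)+\varepsilon.
\]
Letting $\varepsilon\to0$ and then $\delta\to0$, using monotone convergence in $\delta$ on each term of the sum, gives subadditivity for $\mathcal{H}^s$. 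Countability of the collection is essential here, since the argument needs the union of the covers to be a countable cover.
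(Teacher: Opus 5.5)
Your proof is correct. Monotonicity of $\mathcal{H}^s_\delta$ under inclusion gives $\hausdorff{\bigcup_k S_k}\ge\sup_k\hausdorff{S_k}$, and countable subadditivity of $\mathcal{H}^s$, applied for each $s>\sup_k\hausdorff{S_k}$, gives the reverse inequality. The paper gives no proof of its own and simply quotes \cite[Theorem 2(2)]{Schleicher2007HausdorffDimensionIts}, so your argument supplies the standard proof that the citation stands in for.
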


\begin{proposition}[{\cite[Theorem 2(5)]{Schleicher2007HausdorffDimensionIts}}]
\label{proposition:hausdorff_invariance}
If $f$ is a bi-Lipschitz map, then $\hausdorff{S} = \hausdorff{f(S)}$, which implies that diffeomorphic compact sets have the same Hausdorff dimension.
\end{proposition}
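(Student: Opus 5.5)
The plan is to reduce everything to a single inequality for Lipschitz maps, and then apply it in both directions.

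\textbf{Lipschitz maps do not raise dimension.} Let $f$ be Lipschitz with constant $\mathrm{Lip}\ge 1$ on $S$, and let $\mathcal H^t_\delta$ denote the $\delta$-approximate $t$-dimensional Hausdorff measure. Take any countable cover $\{U_i\}$ of $S$ with $\operatorname{diam}U_i\le\delta$. Then $\{f(U_i\cap S)\}$ covers $f(S)$, and
\[
\operatorname{diam} f(U_i\cap S)\le \mathrm{Lip}\cdot\operatorname{diam}U_i \le \mathrm{Lip}\cdot\delta .
\]
Taking the infimum over covers gives $\mathcal H^t_{\mathrm{Lip}\,\delta}(f(S))\le \mathrm{Lip}^t\,\mathcal H^t_\delta(S)$. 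Letting $\delta\to0$ yields $\mathcal H^t(f(S))\le \mathrm{Lip}^t\,\mathcal H^t(S)$ for every $t\ge0$.

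Now let $t>\hausdorff{S}$. Then $\mathcal H^t(S)=0$, hence $\mathcal H^t(f(S))=0$, and so $\hausdorff{f(S)}\le t$. Since $t>\hausdorff{S}$ was arbitrary, $\hausdorff{f(S)}\le\hausdorff{S}$.

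\textbf{Bi-Lipschitz maps preserve dimension.} If $f$ is bi-Lipschitz, then $f^{-1}\colon f(S)\to S$ is Lipschitz. Applying the previous step to $f^{-1}$ gives the reverse inequality $\hausdorff{S}\le\hausdorff{f(S)}$, and hence equality.

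\textbf{Diffeomorphic compact sets.} Suppose $f$ is a $\mathcal C^1$ diffeomorphism defined on an open set $V\supset S$, with $S$ compact. It suffices to show that $f|_S$ is bi-Lipschitz; the inverse is handled symmetrically, since $f(S)$ is compact and $f^{-1}$ is $\mathcal C^1$ on $f(V)$.

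\emph{Local estimate.} For each $x\in S$, choose a closed ball $B_x\subset V$ centred at $x$. Since $B_x$ is convex, the mean value inequality gives Lipschitz constant $\sup_{B_x}\norm{Df}$ on $B_x$.

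\emph{Globalising.} This is the main obstacle: $S$ need not be convex, so the local bound does not immediately give a global one. I would argue by contradiction, using a Lebesgue number $r>0$ of the cover of $S$ by the interiors of the balls $B_x$. If no global constant existed, there would be pairs $x_n,y_n\in S$ with
\[
\abs{f(x_n)-f(y_n)}\ge n\,\abs{x_n-y_n}.
\]
The left-hand side is bounded by $\operatorname{diam} f(S)<\infty$, so $\abs{x_n-y_n}\to0$. Eventually $\abs{x_n-y_n}<r$, so both points lie in a single ball $B_x$. Finitely many balls suffice by compactness, so the local constants are uniformly bounded, contradicting the choice of $x_n,y_n$.

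\emph{One-dimensional case.} In the setting of this paper ($d=1$, with $S$ contained in an interval on which $f$ is $\mathcal C^1$), the globalising step is simpler. One applies the mean value theorem on the convex hull of $S$ directly, with constants $\sup\abs{f'}$ for $f$ and $(\inf\abs{f'})^{-1}$ for $f^{-1}$. Both are finite and positive by continuity of $f'$ and compactness.
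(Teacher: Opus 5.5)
Your proof is correct and is the standard argument behind the result, which the paper does not prove but simply cites from Schleicher. Pushing $\delta$-covers forward gives $\mathcal{H}^t(f(S))\le \mathrm{Lip}^t\,\mathcal{H}^t(S)$, which you apply to $f$ and $f^{-1}$; your justification of the diffeomorphism clause fills in what the paper leaves implicit, and the one-dimensional mean value argument on the convex hull is exactly what the paper needs for the maps $\psi_\eta$ on $[-1,1]$.

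Two small points in the general-dimension globalising step need tidying. First, pass to the finite subcover before taking the Lebesgue number, since a Lebesgue number of the full cover does not bound the local constants uniformly; alternatively, pass to convergent subsequences $x_n,y_n\to x\in S$. Second, choose $x_n\neq y_n$ with a strict inequality, so that the contradiction is genuine.
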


\begin{proposition}[{\cite[Theorem 2(8)]{Schleicher2007HausdorffDimensionIts}}]
\label{proposition:lebesgue_zero}
If $\hausdorff{S} < 1$, then the one-dimensional Lebesgue measure of $S$ is $0$.
\end{proposition}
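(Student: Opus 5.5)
The plan is to go directly through the definition of Hausdorff dimension, using Hausdorff measures rather than any of the CIFS machinery. Let $t:=\hausdorff{S}<1$ and fix an exponent $d$ with $t<d<1$. By definition of Hausdorff dimension as the critical exponent, $\mathcal{H}^{d}(S)=0$. The aim is to pass from $\mathcal{H}^d(S)=0$ to $\mathcal{H}^1(S)=0$, and from there to the vanishing of the one-dimensional Lebesgue outer measure of $S$.

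\textbf{Step 1: a cover of small $d$-size.} Fix $\varepsilon>0$ and $0<\delta<1$. Since $\mathcal{H}^{d}_{\delta}(S)\le \mathcal{H}^d(S)=0$, there is a countable cover $\set{U_i}_{i\ge1}$ of $S$ with $\opname{diam}(U_i)\le\delta$ and
\[
\sum_{i\ge1}\opname{diam}(U_i)^{d}<\varepsilon .
\]

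\textbf{Step 2: comparison of exponents.} Because $\opname{diam}(U_i)\le\delta<1$ and $d<1$, we have $\opname{diam}(U_i)\le \opname{diam}(U_i)^{d}$ for every $i$. Hence
\[
\sum_{i\ge1}\opname{diam}(U_i)<\varepsilon .
\]

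\textbf{Step 3: from $\mathcal{H}^1$ to Lebesgue measure.} Each $U_i\subset\R$ is contained in a closed interval $I_i$ with $\mu(I_i)=\opname{diam}(U_i)$, namely the interval between $\inf U_i$ and $\sup U_i$. Thus $S\subset\bigcup_i I_i$, and so the Lebesgue outer measure satisfies $\mu^*(S)\le\sum_i\opname{diam}(U_i)<\varepsilon$. Since $\varepsilon$ is arbitrary, $\mu^*(S)=0$. In particular $S$ is Lebesgue measurable, being a null set, and $\mu(S)=0$.

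There is no real obstacle in this argument. The only point requiring care is Step 2: one must choose $\delta<1$ so that raising diameters to the power $d<1$ makes them larger, not smaller. This is also the only place where the hypothesis $\hausdorff{S}<1$ is used. Implicitly the argument also uses that on $\R$ the measure $\mathcal{H}^1$ dominates Lebesgue outer measure, which is exactly what Step 3 checks directly.
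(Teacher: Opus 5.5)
Your argument is correct and complete. Choosing $d\in(\dim_H S,1)$ gives $\mathcal{H}^d(S)=0$; the inequality $\operatorname{diam}(U_i)\le\operatorname{diam}(U_i)^d$ for small covers then yields $\mathcal{H}^1(S)=0$; and covering by intervals shows that $\mathcal{H}^1$ dominates Lebesgue outer measure on $\R$. The paper gives no proof of its own and only cites Schleicher's Theorem 2(8), so your self-contained route is the standard argument behind that citation.
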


We also prove one more proposition:

\begin{proposition}\label{prop:regularity}
    The CIFS $\Psi$ is regular; specifically, there exists a unique $1/q < \hat{t} < 1$ such that $P(\hat{t}) = 0$.
\end{proposition}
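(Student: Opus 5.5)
We want a unique zero $\hat t$ of the pressure $P$ lying in $(1/q,1)$. The plan is to show $P(1/q)>0$ (or $P(t)\to+\infty$ as $t\downarrow 1/q$) and $P(1)<0$, and then use continuity and strict monotonicity of $P$ on the domain of finiteness from Proposition \ref{proposition:p}.

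\emph{Domain of finiteness and the upper bound.} By Remark \ref{remark} (Proposition \ref{prop:series_bounds}), $P_1(t)<+\infty$ for every $t>1/q$, so $(1/q,+\infty)\subset\cal{P}$. On this interval $P$ is continuous and strictly decreasing. Lemma \ref{lemma:exponential_decay_measure} gives an $N$ with $P_N(1)=\sum_{\eta\in\cal{J}^N}\norm{\psi_\eta'}_\infty<1$. Lemma \ref{lemma:subadditivity} then yields
\[
P(1)\le \tfrac1N\log P_N(1)<0 .
\]

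\emph{Lower bound near $1/q$.} I would split according to $q$.
\begin{itemize}
\item If $q<+\infty$, the lower bound in \ref{hypothesis:h3} gives $\norm{\psi_{F_n}'}_\infty\ge 1/(Cn^q)$ for $n\ge N_q$. Hence
\[
P_1(t)\ge \sum_{n\ge N_q} C^{-t}n^{-qt},
\]
which diverges at $t=1/q$ and tends to $+\infty$ as $t\downarrow 1/q$ by monotone convergence.
\item The lower estimate of Proposition \ref{proposition:pressure_bounds}, $P(t)\ge -t\log M+\log P_1(t)$, then gives $P(t)\to+\infty$ as $t\downarrow 1/q$. So $P(t_0)>0$ for some $t_0\in(1/q,1)$.
\item If $q=+\infty$, then $1/q=0$ and $P(0)=\lim\frac1k\log\#\cal{J}^k=+\infty$, since there are infinitely many branches. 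Likewise $P_1(t)\ge\sum_{n\le K}\norm{\psi_{F_n}'}_\infty^t\to K$ as $t\downarrow0$ for each $K$, so $P_1(t)\to\infty$. The same bound then gives a $t_0>0$ with $P(t_0)>0$.
\end{itemize}

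\emph{Conclusion.} The intermediate value theorem on $[t_0,1]$ produces $\hat t\in(t_0,1)\subset(1/q,1)$ with $P(\hat t)=0$. Uniqueness on $(1/q,+\infty)$ follows from strict monotonicity. For $t\le 1/q$ with $q<\infty$, either $t\notin\cal{P}$ and $P(t)=+\infty$, or $P(t)\ge P(t_0)>0$ by monotonicity. So $P$ has no other zeros and $\Psi$ is regular.

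The main obstacle is justifying that $P(t)\to+\infty$ (rather than merely staying positive) as $t\downarrow1/q$. This relies on the infinite sum forcing $\log P_1$ to blow up while the $-t\log M$ correction stays bounded. The bounded distortion constant $M$ is uniform in $t$, so this works. I would also check that the definition's phrasing "$\hat t\ge1$" is meant to read "$\hat t$" and does not obstruct the regularity claim.
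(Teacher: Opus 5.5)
Your argument is correct and is essentially the paper's proof. You get the blow-up of $P(t)$ as $t\downarrow 1/q$ from the divergence of $P_1$ together with $-t\log M+\log P_1(t)\le P(t)$, then $P(1)\le \frac{1}{N}\log P_N(1)<0$ via Lemma \ref{lemma:subadditivity}, and conclude with continuity and strict monotonicity of $P$ on $(1/q,+\infty)$. (Minor points: the bound $P_N(1)<1$ is stated in Corollary \ref{corollary:exponential_bounds} rather than Lemma \ref{lemma:exponential_decay_measure}; your extra check that $P$ has no zero on $[0,1/q]$, and your reading of ``$\hat t\ge 1$'' in the definition of regularity as a typo, are sound refinements the paper leaves implicit.)
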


\begin{proof}
    By Proposition~\ref{prop:series_bounds}, we have
    \[
    \lim_{t \to 1/q^+} P_1(t) = +\infty,
    \]
    and using the bound 
    \[
    -t\log M + \log P_1(t) \le P(t)
    \]
    in Proposition~\ref{proposition:pressure_bounds}, we get
    \[
    \lim_{t \to 1/q^+} P(t) = +\infty.
    \]
    
    On the other hand, there exists $N\ge1$ with
    \[
    P(1) \le \dfrac{1}{N}\log P_N(1) < 0	
    \]
    (see Lemma~\ref{lemma:subadditivity} and Corollary~\ref{corollary:exponential_bounds}).
    
    Since $P(t)$ is continuous and strictly decreasing on $(1/q, +\infty)$, we have a unique root $1/q < \hat{t} < 1$ satisfying $P(\hat{t}) = 0$.
\end{proof}

\subsection{Proof of Theorem \ref{theorem:geometry_of_invariant_set}}

We are now in position to prove Theorem \ref{theorem:geometry_of_invariant_set}.

This is just an application of Propositions~\ref{proposition:pressure_hausdorff_dimension} and Proposition~\ref{prop:regularity}  to  conclude the Hausdorff dimension bounds
    \[
    1/q < \dim_H(\Lambda) < 1.
    \]
    
   It immediately follows from Proposition~\ref{proposition:lebesgue_zero} that $\mu(\Lambda) = 0$.

\subsection{Proof of Theorem \ref{theorem:structure_of_closure_of_lambda}}

Using the characterization of the invariant set $\Lambda$ as the attractor set $\Delta$ enabled by Proposition \ref{proposition:lambda_equals_delta}, we can use the CIFS context, where we may have better tools and notations. With this in mind, we are now going to prove Theorem \ref{theorem:structure_of_closure_of_lambda}:

Now we are going to prove that $\ov{\Delta} = \Delta\ \cup\ Z$.

First, we must show $\ov{\Delta}\subset  \Delta\cup Z$, and for that we take $x\in\ov{\Delta} = \bigcap_{k\ge1}\left(\Delta_k\cup Z_{k}\right)$, that is, $x\in\Delta_k\cup Z_{k}$, for all $k\ge1$. If $x\in\Delta_k$, for all $k\ge1$, by \eqref{equation:intersection_delta_k}, we deduce that $x\in\Delta\subset \Delta\cup Z$. Otherwise, for some $\ell\ge 1$ and by  \eqref{equation:Z_alt}, we must have $x\in Z_{\ell}\subset Z\subset \Delta\cup Z$. Therefore, $\ov{\Delta} \subset \Delta \cup Z$.

Now, the opposite inclusion, $ \Delta\cup   Z\subset \ov{\Delta}$. Let $x\in \Delta \cup Z$, and given that $\Delta\subset\ov{\Delta}$, we may consider only the case $x\in Z$. Taking \eqref{equation:Z_alt} into account, let $\kappa\ge1$ 
be the first non-negative integer with  $x\in Z_\kappa$.

If $\kappa=1$, then by applying \eqref{equation:Z_k_nested} we get $x = z\in Z_1 \subset Z_k$, for all $k\ge1$, so $x \in\bigcap_{k\ge1}\left(\Delta_k\cup Z_{k}\right) =\ov{\Delta}$.

Then, let us assume $\kappa\ge2$ and, again by applying \eqref{equation:Z_k_nested}, we deduce it must be the case that
\begin{equation}\label{equation:x_in_Z_kappa}
x\in Z_\kappa\subset Z_k, \text{ for every } k\ge\kappa \implies x\in\cup_{k\ge\kappa}Z_k
\end{equation}
so we just need to prove that $x\in \Delta_k$ for every $1\le k\le\kappa-1$.

This comes from applying \eqref{equation:Z_k} to get that $x\in Z_\kappa \implies x\in\bigcup_{\eta\in\cal{J}^\ell}\psi_{\eta}(Z_1)$, for some $1\le \ell\le\kappa-1$; on the other hand, if $\ell<\kappa$, then $x\in Z_{\ell}$ contradicts the minimality of $\kappa\ge2$, therefore
\begin{equation*}
x\in\bigcup_{\eta\in\cal{J}^\kappa}\psi_{\eta}(Z_1) \iff x\in \psi_{\ov{\eta}}(Z_1), \text{ for some } \ov{\eta} = (J_1, \ldots, J_\kappa)\in\cal{J}^\kappa.
\end{equation*}
This is the same to say that $x\in \psi_{J_1}\circ\cdots\circ\psi_{J_\kappa}(Z_1)\subset \psi_{J_1}\circ\cdots\circ\psi_{J_\kappa}[-1,1]\subset\Delta_\kappa$. But by the nested property in \eqref{equation:lambda_k_is_nested} this means that
\begin{equation}\label{equation:x_in_delta_k}
x\in \Delta_k, \text{ for } 1\le k\le \kappa \implies x\in\cup_{1\le k\le\kappa}\Delta_k;
\end{equation}
clearly \eqref{equation:x_in_Z_kappa} and \eqref{equation:x_in_delta_k} imply $x\in \Delta_k\cup Z_{k}$ for every $k\ge 1$, that is, $x\in \bigcap_{k\ge1}\left(\Delta_k\cup Z_{k}\right)=\ov{\Delta}$.

Furthermore, the union must be disjoint, since $\Delta$ is $\vphi$-invariant, $\Delta\subset\Delta_1$ and $\Delta_1\cap Z_1=\emptyset$, so we must have $\Delta\cap Z=\emptyset$.

\subsection{Proof of Theorem \ref{theorem:geometry_of_closure_of_lambda}}
\label{subsection:proof_Ac}

We now may quickly prove Theorem \ref{theorem:geometry_of_closure_of_lambda}:

Firstly, since $\ov{\Lambda} = \Lambda\, \cup\, Z$, we deduce $\hausdorff{\ov{\Lambda}} = \max\set{\hausdorff{\Lambda}, \hausdorff{Z}}$, by Proposition \ref{proposition:stability}. In fact, by applying the bi-Lipschitz property of all maps in $\Psi$ and Proposition \ref{proposition:hausdorff_invariance} many times, we can even conclude that $\hausdorff{Z} = \hausdorff{Z_1}$, thus $\hausdorff{\ov{\Lambda}} = \max\set{\hausdorff{\Lambda}, \hausdorff{Z_1}}$.

By Theorem \ref{theorem:geometry_of_invariant_set}, we have $\hausdorff{\Lambda} < 1$.
Then, by applying Proposition \ref{proposition:lebesgue_zero}, we deduce that $\mu(\Lambda) = 0$, which together with Theorem \ref{theorem:structure_of_closure_of_lambda} implies $\mu(\ov{\Lambda}) = \mu(Z)$.

\subsection{Proof of Theorem \ref{theorem:cantor_set}}

A \emph{Cantor set} is a non-empty metric space $F$  which is compact, totally disconnected, and perfect. Topologically speaking, the Cantor set is unique, since any two of them have a homeomorphism to each other (see, for instance, \cite[Chapter 2, Theorem 67 and Theorem 73]{Pugh2015RealMathematicalAnalysis}).

Now, we are ready to prove Theorem \ref{theorem:cantor_set}:

We will show that $\ov{\Lambda}$ has each one of the properties that defines a Cantor set.

Clearly $\ov{\Lambda}$ is a closed subset of the compact space $[-1,1]$, so it is compact.

Next, we show that $\ov{\Lambda}$ is totally disconnected. For that, we recall that a subset of $\R$ is totally disconnected if and only if it contains no non-empty intervals. Let us define the open set
\begin{equation}\label{eq:G}
G := (-1,1) \setminus \ov{W},
\end{equation}
which is non-empty because each $F_k$ is isolated by an open set $G_k$. Note that $\Lambda\subset W \subset\ov{W}\implies G\cap\Lambda=\emptyset$, and also since $\Lambda$ is invariant, $\psi_{\eta}(V)\cap\Lambda =\emptyset$ for any $\eta\in\cal{J}^k$, with $k\ge1$, and for any subset $V \subset G$. Finally, let $x \in \ov{\Lambda}$ and let $U$ be an arbitrary open neighborhood of $x$. We consider two cases based on the decomposition $\ov{\Lambda} = \Lambda \,\dot{\cup}\, Z$ given by Theorem \ref{theorem:structure_of_closure_of_lambda}:
\begin{itemize}
	\item If $x \in \Lambda$, there exists an infinite sequence $\omega \in \cal{J}^\N$ with $x = \proj(\omega)$. Because the CIFS $\Psi$ is uniformly contractive by \ref{hypothesis:h1}, the diameter of the interval $\psi_{\omega|_k}[-1,1]$ converges to $0$ as $k \to \to +\infty$. Thus, for a sufficiently large $k_U$, this interval is contained in $U$. Now we consider a non-empty open interval $V\subset G$. The diffeomorphic image $\psi_{\omega|_{k_U}}(V)$ is a non-empty open interval inside $U$, disjoint from $\Lambda$ and consequently from $\ov{\Lambda}$.

	\item If $x \in Z$, by Theorem \ref{theorem:structure_of_closure_of_lambda}, there exists an integer $k_x \ge 0$ such that $x \in \vphi^{-k_x}(Z_1)$, which is to say that $x = \psi_{\eta}(z)$ for some word $\eta \in \cal{J}^{k_x}$ and some $z \in Z_1$. Given that $x\in U$, then the diffeomorphic image $\psi_{\eta}^{-1}(U)$ contains an open interval $I$ with $z\in I$. Since $z \in Z_1 = \ov{W} \setminus W$, we have that $I$ must intersect some interval $F_j\subset W$ with $z\notin F_j$, and since $I$ is connected, we have $I\cap\partial F_j\neq\emptyset$. This, together with the fact that $F_j\subset G_j\subset G$, implies that $\tilde{G}_j := G_j \setminus F_j \subset G$ is an open set satisfying $I\cap \tilde{G}_j\neq\emptyset$, so there is a non-empty open interval $V\subset I\cap\tilde{G}_j\subset G$. Again, by taking the diffeomorphic image $\psi_{\eta}(V)$, we obtain a non-empty open interval inside $U$, disjoint from $\Lambda$ and consequently from $\ov{\Lambda}$.
\end{itemize}
In either case, every open neighborhood of every point in $\ov{\Lambda}$ contains a non-empty open interval completely disjoint from $\ov{\Lambda}$. This implies that $\ov{\Lambda}$ cannot contain any non-empty interval, so it is totally disconnected.

To show $\ov{\Lambda}$ is perfect, we choose an arbitrary $x\in\Lambda$, and from \eqref{equation:atractor_set}, we take its representation by the projection map $\ov{\eta}=(J_1, J_2, J_3, \ldots)\in\cal{J}^{\N}$. In other words, we write $x=\proj(\ov{\eta})$ (recall that this is well-defined due to the uniformly contractive property \ref{hypothesis:h1} satisfied by the CIFS $\Psi$). Choosing a sequence of elements $\set{x_k}_{k\ge1}\subset\Lambda$ satisfying $x_k\in \psi_{(J_1,\ldots,J_k)}[-1,1]$ and $x_k\not\in\psi_{(J_1,\ldots,J_k,J_{k+1})}[-1,1]$, for $k\ge1$, we assure that the sequence satisfies both $\set{x_k}_{k\ge 1}\subset \Lambda\setminus\set{x}$ and $x_k\to x$, so $x$ is an accumulation point of $\Lambda$. Since $x$ was arbitrary and the closure of a set with no isolated points cannot contain isolated points, we deduce that $\ov{\Lambda}$ is a perfect set.

\subsection{Proof of Theorem \ref{theorem:topological_conjugation}}

Let $I$ be a countable index set, finite or infinite. We now look at the symbolic dynamical system $I^\N$ with the so-called \textit{shift map} $\sigma:I^\N\to I^\N$ defined by $\sigma(i_1, i_2, i_3, \ldots) := (i_2, i_3, i_4, \ldots)$.

This set can be endowed with the metric $\opname{dist}:I^\N\times I^\N \to [0,+\infty)$ defined as
\[
\opname{dist}(a,b) = \opname{dist}((a_n)_{n\ge1}, (b_n)_{n\ge1}) := \begin{cases}
	0, &\quad\text{if } a=b; \\
	\quad\\
	\dfrac{1}{\kappa}, &\quad\text{if } a\neq b,
	\end{cases}
\]
where $\kappa:=\min\set{j\in\N \st a(j)\neq b(j)}$, and in this space $\sigma$ is a continuous function. This system is an example of a \emph{(topological) Bernoulli system}.

If we make a topological conjugation of a system to a Bernoulli system, we gain a lot of information. Thus our aim in the following proposition is to prove that the dynamical system $(\Lambda, \vphi)$  and a Bernoulli system are topologically conjugated to each other.

Consider the dynamical system $(\cal{J}^\N, \sigma)$. In what follow, we are going to show that the following diagram commutes:
\[
\begin{tikzcd}[column sep=8em, row sep=6em ,every label/.append
style={font=\normalsize}]
\cal{J}^{\N} \arrow{r}{\sigma} \arrow{d}[swap]{\proj} & \cal{J}^{\N} \arrow{d}{\proj} \\
\Lambda \arrow{r}[swap]{\vphi|_\Lambda} & \Lambda
\end{tikzcd}
\]
Furthermore, the function $\proj:\cal{J}^{\N}\to\Lambda$ is an homeomorphism.

For a given $\omega = (J_1, J_2, J_3, \ldots)\in\cal{J}^{\N}$, we have that
\begin{gather*}
\proj \circ\ \sigma(\omega) = \proj \circ\ \sigma(J_1, J_2, J_3, \ldots) \\
= \proj(J_2, J_3, J_4, \ldots) = \psi_{(J_2, J_3, J_4, \ldots)}[-1,1].
\end{gather*}

On the other hand, we also have
\begin{gather*}
\vphi\circ\proj(\omega) = \vphi\circ\proj(J_1, J_2, J_3, \ldots) = \vphi\circ\psi_{(J_1, J_2, J_3, \ldots)}[-1,1] \\
= \vphi \circ \underbrace{\psi_{J_1}}_{\vphi_{J_1}^{-1}}\circ\ \psi_{(J_2, J_3, J_4, \ldots)}[-1,1]
= \psi_{(J_2, J_3, J_4, \ldots)}[-1,1].
\end{gather*}

So the diagram commutes.

Now, to prove that the projection map $\proj$ is an homeomorphism:
 
\begin{itemize}
	\item $\proj$ is continuous:
	
	Take arbitrary $a, b\in\cal{J}^{\N}$, with $a\neq b$ and $\opname{dist}(a,b)<\epsilon$.
	Then there exists $\kappa=\kappa(\epsilon)\ge1$, such that $a(i) =  b(i)$, for all $1\le i\le\kappa$.
	
We have
\[
\proj(a) = \psi_a[-1,1] = \psi_{a(1)}\circ\ldots\circ\psi_{a(\kappa)}\left(\psi_{\sigma^{\kappa+1}(a)}[-1,1]\right)
\]
and
\begin{align*}
\proj(b) = \psi_ b[-1,1] &= \psi_{ b(1)}\circ\ldots\circ\psi_{ b(\kappa)}\left(\psi_{\sigma^{\kappa+1}(b)}[-1,1]\right) \\
&= \psi_{a(1)}\circ\ldots\circ\psi_{a(\kappa)}\left(\psi_{\sigma^{\kappa+1}(b)}[-1,1]\right),
\end{align*}
and so
\begin{gather*}
	\Bigabs{\proj(a) - \proj(b)} \le \sup_{x,y\in[-1,1]}\Bigabs{\psi_{(a(1),\ldots,a(\kappa))}(x) - \psi_{(a(1),\ldots,a(\kappa))}(y)}\\
	\le s^{\kappa}\cdot\sup_{x,y\in[-1,1]}\abs{x-y}\le \opname{diam}[-1,1]\cdot s^{\kappa},
\end{gather*}
which guarantees the continuity of the projection map.
	
	\item $\proj^{-1}$ is continuous: 
	
	Choose arbitrary $x,y\in\Lambda$, with $x\neq y$ and $\abs{a-b}<\epsilon$.  So there exists $\kappa = \kappa(\epsilon)\ge1$ such that $x,y\in\psi_{J_1}\circ\ldots\circ\psi_{J_\kappa}[-1,1]$.
	
	Therefore, we have \[
	\proj^{-1}(x) = (J_1,\ldots,J_{\kappa}, J_{\kappa+1}^x, J_{\kappa+2}^x,\ldots)\in\cal{J}^{\N}
	\]
	and 
	\[
	\proj^{-1}(x) = (J_1,\ldots,J_{\kappa}, J_{\kappa+1}^y, J_{\kappa+2}^y,\ldots)\in\cal{J}^{\N},\]
	for some $\set{J_i}_{1\le i\le\kappa}\subset\cal{J}$ and $\set{J_i^x,J_i^y}_{i\ge\kappa+1}\subset\cal{J}$.
	
	From this, we deduce that $\opname{dist}\left(\proj^{-1}(x), \proj^{-1}(y)\right) \le \dfrac{1}{\kappa}$, proving that $\proj^{-1}$ is also continuous.
	\end{itemize}

Since trivially $\left(\mathcal{J}^{\N}, \sigma\right) \cong \left(\N^\N, \sigma\right)$, we have the desired result.

Therefore, topologically the dynamical system $(\Lambda, \vphi|_\Lambda)$ has the same behavior of the symbolic dynamical system $(\cal{J}^\N, \sigma)$, from which we may deduce topological properties. As an example, we may find that the topological entropy of $(\Lambda, \vphi|_\Lambda)$ is infinite, and its set of periodic orbits is dense.

\subsection{Proof of Theorem \ref{theorem:conformal_measure}}

In Subsection \ref{subsection:proof_Ac}, it is showed that the one-dimensional Lebesgue measure of the invariant set $\Lambda$ is zero. However, other measures may, in principle, convey information about the dynamics of the map $\vphi$. So now we are going to se some results of the theory of regular CIFS, in order to apply them to the local invariant set and the first return map.

Now, we are going to explore some definitions and results to properly apply some geometric results of the theory of regular CIFS to our context:  

\begin{definition}[$t$-Conformal Measures]
\label{definition:t_conformal_measures}

Consider a CIFS $\cal{F}$ and $t\ge1$ a positive real number. A measure $m_t$ is called a \emph{$t$-conformal measure (for the CIFS $\cal{F}$)} if
\begin{enumerate}
    \item $m_t(\Delta) = 1$;
    \item $m_t(f_i(B)) = \int_{B}\norm{Df_i}^tdm_t$, for all $i\in I$ and for all borelian sets $B$;
    \item $m_t(f_i(K)\cap f_j(K)) = 0$, for all $i\in I$, $i\neq j$.
\end{enumerate}
\end{definition}

The next proposition shows us how the regularity of a CIFS and the existence of a $t$-conformal measure for the CIFS are closely related:

\begin{proposition}[{\cite[Theorem 7.2]{Mauldin1995InfiniteIteratedFunction}}]
\label{proposition:existence_conformal_measure}
A CIFS $\cal{F}$ is $\hat{t}$-regular if and only if there exists a $\hat{t}$-conformal measure $m_{\hat{t}}$ for $\cal{F}$.
\end{proposition}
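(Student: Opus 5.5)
The approach is to treat the two implications separately, working throughout with the Perron--Frobenius (transfer) operator and using the bounded distortion property \ref{condition:bdp} as the bridge between measures of cylinders and the sums $P_k(t)$. The implication ``conformal measure $\Rightarrow$ regular'' is a short computation. The converse requires constructing the measure, and that is where I expect the real work.

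\emph{Conformal measure $\Rightarrow$ regular.} Let $m_t$ be a $t$-conformal measure. I first iterate property (2) of Definition \ref{definition:t_conformal_measures} along words $\omega\in I^k$, using the chain rule, to get
\[
m_t(f_\omega(B))=\int_B\norm{Df_\omega}^t\,dm_t .
\]
Taking $B=K$ and applying \ref{condition:bdp}, this gives
\[
M^{-t}\norm{Df_\omega}_\infty^t\le m_t(f_\omega(K))\le \norm{Df_\omega}_\infty^t .
\]
Next, property (3), propagated to level $k$ by injectivity and the same iteration, shows that the sets $f_\omega(K)$, $\omega\in I^k$, overlap only on $m_t$-null sets. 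Since $\Delta\subset\bigcup_{\omega\in I^k}f_\omega(K)$ and $m_t(\Delta)=1$, summing over $\omega$ yields
\[
M^{-t}P_k(t)\le 1\le P_k(t) .
\]
Taking $\frac1k\log$ and letting $k\to\infty$ gives $P(t)=0$. Uniqueness of the zero then follows from the strict monotonicity in Proposition \ref{proposition:p}, once $t$ lies in $\cal{P}$; this is automatic because $P_1(t)\le M^t<\infty$.

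\emph{Regular $\Rightarrow$ conformal measure.} Fix $\hat t$ with $P(\hat t)=0$. Then $P_1(\hat t)<\infty$ by Proposition \ref{proposition:pressure_bounds}. I define
\[
\mathcal{L}g(x)=\sum_{i\in I}\norm{Df_i(x)}^{\hat t}\,g(f_i(x))
\]
on $C(K)$. Finiteness of $P_1(\hat t)$ together with \ref{condition:bdp_alt} make this a bounded positive operator. I would apply the Schauder--Tychonoff theorem to the weak-$*$ continuous map
\[
\nu\mapsto\mathcal{L}^*\nu/(\mathcal{L}^*\nu)(1)
\]
on the compact convex set of Borel probability measures on $K$. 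This produces an eigenmeasure $\nu$ with $\mathcal{L}^*\nu=\lambda\nu$. Unwinding the eigen-equation gives
\[
\nu(f_\omega(B))=\lambda^{-k}\int_B\norm{Df_\omega}^{\hat t}d\nu
\]
modulo overlaps. Then \ref{condition:bdp} gives $\lambda^k\asymp P_k(\hat t)$, so $\log\lambda=P(\hat t)=0$ and $\lambda=1$, which is conformality property (2). Property (1), $\nu(\Delta)=1$, I would obtain from Lemma \ref{lemma:switch_union_intersection}: $(\mathcal{L}^*)^k\nu=\nu$ is carried by $\bigcup_{\omega\in I^k}f_\omega(K)$ for every $k$, hence by the intersection over $k$.

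The main obstacle is property (3), namely $\nu(f_i(K)\cap f_j(K))=0$. The open set condition \ref{condition:osc} only guarantees that the interiors $f_i(K^\circ)$ are disjoint, so the intersections sit in images of $\partial K$. I would try to show $\nu(\partial K)=0$, whence $\nu(f_\omega(\partial K))=0$ for every word by conformality. The cone condition \ref{condition:cone_alt}, combined with \ref{condition:bdp}, should give that each $f_\omega(K)$ contains a ball comparable to its diameter. A volume-counting argument then bounds how many level-$k$ cylinders of comparable size can meet a small neighbourhood of a boundary point, and this should let one show that mass concentrated on $\partial K$ would be incompatible with $\lambda=1$ and the iterated eigen-equation. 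In the one-dimensional setting relevant here, $\partial K=\{-1,1\}$ is finite, so one only needs $\nu$ to have no atoms at $\pm1$. This can be checked directly: an atom $a$ would force $\nu(\{f_\omega(a)\})=\norm{Df_\omega(a)}^{\hat t}\nu(\{a\})$ along a point with infinitely many distinct preimage branches, contradicting finiteness of $\nu$.
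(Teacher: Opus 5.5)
The paper gives no proof of this statement; it imports it from Mauldin [Theorem 7.2]. Your direction ``conformal $\Rightarrow$ regular'' is correct. In the converse, three steps are fine: the Schauder--Tychonoff eigenmeasure, the identification $\lambda=1$ (cleanest via $\lambda^k=\int\mathcal{L}^k\mathbf{1}\,d\nu$ and (BDP), which involves no overlap issue), and $\nu(\Delta)=1$.

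The gap is the step you flag yourself. Exact equality in property (2), and property (3), both follow once $\nu\bigl(f_i^{-1}(f_j(K))\bigr)=0$ for $i\neq j$. Your plan obtains this from $\nu(\partial K)=0$, and that cannot work, because $\nu(\partial K)=0$ is false for legitimate CIFS. Take $K=[0,1]^2$, $f_1(x,y)=(x/2,y/2)$ and $f_2(x,y)=(x/2+1/2,y/2)$. Conditions (C1)--(C6) hold and $P(t)=(1-t)\log 2$. The attractor is the edge $[0,1]\times\{0\}\subset\partial K$, and the unique probability fixed by $\mathcal{L}^*$ at $t=1$ is normalized length on that edge, so $\nu(\partial K)=1$. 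Mass on $\partial K$ is therefore compatible with $\lambda=1$ and the iterated eigen-equation, and the volume-counting heuristic you describe cannot produce a contradiction. Conformality survives in this example only because $f_1^{-1}(f_2(K))=\{1\}\times[0,1]$ meets the edge in a single point. A proof for general CIFS has to control the overlap sets themselves, not all of $\partial K$, and your sketch contains no argument for that; this is the nontrivial content of the cited theorem.

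In dimension one the reduction to atoms at $\pm1$ is legitimate, but your atom argument does not close as written. The eigen-equation only gives $\nu(\{f_\omega(a)\})\ge\norm{Df_\omega(a)}^{\hat t}\nu(\{a\})$; equality presupposes (2). Moreover, the level-one images $f_i(a)$ carry total mass at most $\nu(\{a\})P_1(\hat t)<\infty$, so having infinitely many branches contradicts nothing. You would need to sum over all levels using $P_k(\hat t)\ge e^{kP(\hat t)}=1$, and also rule out coincidences $f_\omega(a)=f_{\omega\tau}(a)$, which do occur when $a$ is fixed by some $f_\tau$.

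A clean fix: let $y$ be an atom of maximal mass $c^*>0$. A point lies in at most two of the intervals $f_\omega(K)$ with $\abs{\omega}=n$, and $\abs{Df_\omega}\le s^n$. Evaluating $\nu=(\mathcal{L}^*)^n\nu$ on $\{y\}$ then gives $c^*\le 2s^{n\hat t}c^*$. This is impossible for large $n$, since $\hat t>0$ once there are at least two maps.

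Finally, for the system $\Psi$ actually used in the paper none of this is needed. By (H0) the images $\psi_J([-1,1])=J$ are pairwise disjoint, so $f_i^{-1}(f_j(K))=\emptyset$, and (2) and (3) follow immediately from $\mathcal{L}^*\nu=\nu$.
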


For the following result, we need to introduce some definitions and notations: for each $\eta = (\eta_1\ldots,\eta_k)\in \bigcup_{j} I^j$, we refer to $\abs{\eta} := k$ as being the \emph{length of $\eta$}. We also define the set $[\eta] := \set{\omega\in I^\N \st \omega|_{\abs{\eta}} = \eta}$, which is called the \emph{cylinder generated by $\eta$}.

We then have the following: 
\begin{proposition}[{\cite[Lemma 3.6 and Theorem 3.8]{MauldinUrbanski1996DimensionsMeasuresInfinite}}]\label{proposition:measure_on_index_set}
Let $\cal{F}$ be a $\hat{t}$-regular CIFS, and consider the $\hat{t}$-conformal measure $m_{\hat{t}}$ given by Proposition \ref{proposition:existence_conformal_measure}.

Then there exists a unique Borel probability measure $\nu$ on $I^\N$ with
\[
\nu([\eta]) = \int_K\norm{Df_{\eta}}^{\hat{t}}dm_{\hat{t}},
\]
for all $\eta\in\cup_{j\ge 1}I^j$.

Besides, there is a unique measure $\nu^*$ on $I^\N$, with the property of being invariant under the shift $\sigma$ and being equivalent to $\nu$.

Moreover, $\dfrac{1}{M^{\hat{t}}} \le \dfrac{d\nu^*}{d\nu} \le M^{\hat{t}}$, with $M$ being the constant from Condition \ref{condition:bdp}.
\end{proposition}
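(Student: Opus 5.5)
The plan is to build $\nu$ as a pull-back of the conformal measure $m_{\hat t}$ to the symbol space via $\proj$, and then to obtain $\nu^*$ by averaging $\nu$ along the shift, using the Bounded Distortion Property to control everything uniformly.

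\textbf{Construction and uniqueness of $\nu$.} First I would iterate property (2) of Definition \ref{definition:t_conformal_measures} along a word $\eta=(\eta_1,\ldots,\eta_k)$. Using the chain rule $\norm{Df_\eta(x)}=\prod_j\norm{Df_{\eta_j}(f_{\sigma^j\eta}(x))}$, this gives
\[
m_{\hat t}\big(f_\eta(B)\big)=\int_B\norm{Df_\eta}^{\hat t}\,dm_{\hat t}.
\]
I would then set $\nu:=m_{\hat t}\circ\proj$ on cylinders, that is $\nu([\eta]):=m_{\hat t}(f_\eta(K))$. The identity above with $B=K$ yields exactly the displayed formula $\nu([\eta])=\int_K\norm{Df_\eta}^{\hat t}\,dm_{\hat t}$.

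The consistency condition $\sum_{i\in I}\nu([\eta i])=\nu([\eta])$ follows from two facts:
\begin{itemize}
\item $m_{\hat t}$ is concentrated on $\Delta=\bigcup_i f_i(\Delta)$, by property (1);
\item the overlaps $f_i(K)\cap f_j(K)$ are $m_{\hat t}$-null, by property (3), and this nullity is transported by the injective maps $f_\eta$.
\end{itemize}
Hence $\nu$ extends (Carath\'eodory/Kolmogorov) to a Borel probability measure on $I^\N$. Uniqueness holds because cylinders form a $\pi$-system generating the Borel $\sigma$-algebra of the product topology.

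\textbf{Quasi-Bernoulli estimate.} The key estimate I would prove next is
\[
M^{-\hat t}\,\nu([\omega])\,\nu([\eta])\;\le\;\nu([\omega\eta])\;\le\;M^{\hat t}\,\nu([\omega])\,\nu([\eta]).
\]
It follows from $\norm{Df_{\omega\eta}(x)}=\norm{Df_\omega(f_\eta x)}\,\norm{Df_\eta(x)}$ together with \ref{condition:bdp}, which gives $\norm{Df_\omega}$ comparable to its average $\int_K\norm{Df_\omega}^{\hat t}\,dm_{\hat t}=\nu([\omega])$ up to the factor $M^{\hat t}$. Summing over all $\omega$ of length $n$ gives
\[
M^{-\hat t}\nu(A)\le\nu(\sigma^{-n}A)\le M^{\hat t}\nu(A)
\]
for cylinders $A$. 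A monotone class argument extends this to all Borel sets $A$.

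\textbf{Construction of $\nu^*$.} I would define $\nu^*(A):=\mathrm{LIM}_n\,\frac1n\sum_{k<n}\nu(\sigma^{-k}A)$, where $\mathrm{LIM}$ is a Banach limit. I avoid weak$^*$ limits on purpose, since $I^\N$ is not compact. Then:
\begin{itemize}
\item $\nu^*$ is finitely additive and $\sigma$-invariant by construction;
\item it satisfies $M^{-\hat t}\nu\le\nu^*\le M^{\hat t}\nu$ by the estimate above;
\item domination by $M^{\hat t}\nu$ upgrades finite additivity to countable additivity, because tails are uniformly small.
\end{itemize}
This gives the equivalence $\nu^*\sim\nu$ and the stated Radon--Nikodym bounds.

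\textbf{Ergodicity and uniqueness of $\nu^*$.} For ergodicity, the lower quasi-Bernoulli bound gives $\nu(A\cap\sigma^{-n}B)\ge M^{-\hat t}\nu(A)\nu(B)$ for $A$ a cylinder of length $\le n$. This extends to $A$ measurable with respect to the first $n$ coordinates and then, by approximation, to arbitrary Borel sets. Applied to an invariant set $B$ and $A=B^c$, it forces $\nu(B)\in\{0,1\}$, so $\nu$, and hence $\nu^*$, is ergodic. Uniqueness then follows because an invariant measure equivalent to $\nu^*$ has an invariant density, which is constant by ergodicity.

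The step I expect to be the main obstacle is this last chain: carefully passing the quasi-Bernoulli inequality from cylinders to general sets, and making the countable additivity of the Banach-limit measure rigorous.
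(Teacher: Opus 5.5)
Your proposal is correct and follows the paper's route. The paper gives no proof of its own; it cites Lemma 3.6 and Theorem 3.8 of \cite{MauldinUrbanski1996DimensionsMeasuresInfinite}, and your argument is essentially the one there: pull $m_{\hat t}$ back to cylinders, obtain the quasi-Bernoulli bounds from \ref{condition:bdp}, take a Banach limit of Ces\`aro averages, and get ergodicity (hence uniqueness) from the lower bound.

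One statement in the ergodicity step needs tightening. At a fixed $n$, the inequality $\nu(A\cap\sigma^{-n}B)\ge M^{-\hat t}\nu(A)\nu(B)$ does not extend to arbitrary Borel $A$; take $A=\sigma^{-n}(B^c)$ with $0<\nu(B)<1$. It does extend for $\sigma$-invariant $B$, because then $\sigma^{-n}B=B$ for every $n$. This gives $\nu([\omega]\cap B)\ge M^{-\hat t}\nu([\omega])\nu(B)$ for every cylinder, and the monotone class theorem carries it to all Borel $A$, which is all you need when you set $A=B^c$.
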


When each point of the attractor set $\Delta$ is given by a unique $\omega\in I^\N$ by the means of the projection map, the measure $\nu^*$ on the coding space naturally associates to a measure on the attractor set itself. This association happens to interact with the dynamics induced by the CIFS. Now, consider $T:\Delta\to\Delta$ defined by $T(\proj(\omega)) := \proj(\sigma(\omega))$.

\begin{proposition}[{\cite[Theorem 8.2]{Mauldin1995InfiniteIteratedFunction}}]
\label{proposition:shift_invariant}
Let $\cal{F}$ be a $\hat{t}$-regular CIFS. Then $m_{\hat{t}} = \nu\circ\proj^{-1}$, and the measure $m_{\hat{t}}^* := \nu^*\circ \proj^{-1}$ on $\Delta$ is the unique invariant and ergodic measure with respect to $T$ and satisfying $m_{\hat{t}}^* \sim m_{\hat{t}}$ on $\Delta$.
\end{proposition}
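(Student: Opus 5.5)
The plan is to transfer everything from the coding space $I^\N$ to $\Delta$ through $\proj$, using the intertwining relation $T\circ\proj=\proj\circ\sigma$, which holds by definition of $T$. The key technical point is that $\proj$ is injective off a $\nu$-null set. The steps are: identify $m_{\hat t}$ with $\nu\circ\proj^{-1}$, push forward invariance, prove ergodicity on the symbolic side, and get uniqueness from mutual singularity of distinct ergodic measures.

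\emph{Step 1: $m_{\hat t}=\nu\circ\proj^{-1}$.}
\begin{itemize}
\item Iterating property (2) of Definition~\ref{definition:t_conformal_measures} with the chain rule gives
\[
m_{\hat t}(f_\eta(K))=\int_K\norm{Df_\eta}^{\hat t}\,dm_{\hat t}=\nu([\eta])
\]
for every finite word $\eta$.
\item Clearly $[\eta]\subset\proj^{-1}(f_\eta(K)\cap\Delta)$. The difference consists of codings $\omega$ whose image lies in some overlap $f_{\eta|_j\,i}(K)\cap f_{\eta|_j\,i'}(K)$ with $i\neq i'$.
\item By property (3), each overlap $f_i(K)\cap f_j(K)$ has $m_{\hat t}$-measure zero. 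Its images under the finitely-composed $f_\eta$ also have measure zero, by (2), since $m_{\hat t}(f_\eta(B))=\int_B\norm{Df_\eta}^{\hat t}\,dm_{\hat t}$. Call the countable union of all these sets $N$, so $m_{\hat t}(N)=0$.
\item The same formula shows that the $\nu$-mass of $\proj^{-1}(N)$ is also zero: it is a countable union of pieces, each bounded by an integral of $\norm{Df_\eta}^{\hat t}$ over a $m_{\hat t}$-null set.
\item Hence both measures agree on the $\pi$-system $\{f_\eta(K)\cap\Delta\}\cup\{\emptyset\}$. By (C2) the diameters of these sets tend to zero uniformly in the word length, so this $\pi$-system generates the Borel $\sigma$-algebra of $\Delta$. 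Dynkin's $\pi$–$\lambda$ theorem then gives equality.
\end{itemize}
I expect this null-set bookkeeping to be the main obstacle. What I would try first is exactly the above: show that $\proj$ is a bijection from $I^\N\setminus\proj^{-1}(N)$ onto $\Delta\setminus N$, and that both exceptional sets are null.

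\emph{Step 2: invariance and equivalence.} Since $\nu^*$ is $\sigma$-invariant (Proposition~\ref{proposition:measure_on_index_set}) and $T\circ\proj=\proj\circ\sigma$, we get
\[
m^*_{\hat t}(T^{-1}B)=\nu^*(\proj^{-1}T^{-1}B)=\nu^*(\sigma^{-1}\proj^{-1}B)=m^*_{\hat t}(B).
\]
The Radon--Nikodym bounds $M^{-\hat t}\le d\nu^*/d\nu\le M^{\hat t}$ pass through the pushforward, so $m^*_{\hat t}\sim m_{\hat t}$, with the same density bounds.

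\emph{Step 3: ergodicity.} I would prove ergodicity of $\nu^*$ on the symbolic side; it then transfers to $m^*_{\hat t}$ because $\proj$ is a measurable isomorphism mod null sets (Step 1).
\begin{itemize}
\item From the BDP, $\norm{Df_{\eta\omega}}\ge M^{-1}\norm{Df_\eta}\,\norm{Df_\omega}$, which gives the quasi-Bernoulli estimate
\[
\nu([\eta]\cap\sigma^{-|\eta|}A)\ge M^{-2\hat t}\,\nu([\eta])\,\nu(A)
\]
for Borel $A\subset I^\N$. (The power of $M$ is immaterial; any uniform constant works.)
\item Let $A$ be $\sigma$-invariant with $\nu(A)>0$ and $\nu(A^c)>0$. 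Then $\sigma^{-|\eta|}A=A$, so the estimate applied to $A^c$ gives $\nu(A^c\cap[\eta])\ge c\,\nu([\eta])\,\nu(A^c)$ for every cylinder.
\item By the martingale (Lebesgue density) theorem for cylinders, $\nu(A\cap[\omega|_k])/\nu([\omega|_k])\to1$ for $\nu$-a.e. $\omega\in A$. This contradicts the lower bound on the proportion of $A^c$ in each cylinder. So $\nu(A)\in\{0,1\}$.
\item Since $\nu^*\sim\nu$, $\nu^*$ is ergodic as well.
\end{itemize}

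\emph{Step 4: uniqueness.} Let $\mu$ be another $T$-invariant ergodic measure with $\mu\sim m_{\hat t}$. Then $\mu\sim m^*_{\hat t}$. Two distinct ergodic invariant probability measures are mutually singular, so $\mu=m^*_{\hat t}$.
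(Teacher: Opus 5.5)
\paragraph{Verdict.} There is a genuine gap, in Step~1, at the claim $\nu(\proj^{-1}(N))=0$. It is repairable, and Steps~2--4 are fine.

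\paragraph{Where the argument fails.} All you know about $\nu$ is its value on cylinders, $\nu([\eta])=\int_K\norm{Df_\eta}^{\hat t}\,dm_{\hat t}$. The sets $\proj^{-1}\bigl(f_\eta(f_i(K)\cap f_j(K))\bigr)$ are not cylinders: they contain codings that do not start with $\eta$. Their part inside $[\eta]$ is $\{\eta\omega : \proj(\omega)\in f_i(K)\cap f_j(K)\}$, which is again not a cylinder. To bound the $\nu$-mass of such sets by an integral against $m_{\hat t}$, you must already know how $\nu$ compares with $m_{\hat t}$ beyond cylinders. That means turning ``$m_{\hat t}$-null'' into ``$\nu\circ\proj^{-1}$-null'', which is exactly the absolute continuity $\nu\circ\proj^{-1}\ll m_{\hat t}$ that Step~1 is meant to establish. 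So the justification is circular.

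A related problem: $\{f_\eta(K)\cap\Delta\}\cup\{\emptyset\}$ is not closed under intersection when images touch. For incomparable words the intersection lies in $N$ but need not be empty. So Dynkin's theorem can only be applied on $\Delta\setminus N$, and only after both measures are known to vanish on $N$.

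\paragraph{A direct repair.} You can bypass $N$ by proving $\nu(\proj^{-1}(U))\le m_{\hat t}(U)$ for every open $U$.
\begin{enumerate}
\item Since $\opname{diam} f_\eta(K)\le s^{\abs{\eta}}\opname{diam}K$, the set $\proj^{-1}(U)$ is the disjoint union of the cylinders $[\eta]$, $\eta\in\mathcal{E}$. Here $\mathcal{E}$ consists of the words $\eta$ with $f_\eta(K)\subset U$ such that no proper prefix of $\eta$ has this property.
\item Distinct words of $\mathcal{E}$ are incomparable. If $j$ is the first position where $\eta,\eta'\in\mathcal{E}$ differ, (C1) gives $f_\eta(K)\cap f_{\eta'}(K)\subset f_{\eta|_{j-1}}\bigl(f_{\eta_j}(K)\cap f_{\eta'_j}(K)\bigr)$. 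This set is $m_{\hat t}$-null by (2) and (3), so nullity is now used on the correct side.
\item Therefore
\[
\nu(\proj^{-1}(U))=\sum_{\eta\in\mathcal{E}}m_{\hat t}(f_\eta(K))=m_{\hat t}\Bigl(\bigcup_{\eta\in\mathcal{E}}f_\eta(K)\Bigr)\le m_{\hat t}(U).
\]
\item Outer regularity extends $\nu\circ\proj^{-1}\le m_{\hat t}$ to all Borel sets. Both are probability measures, so applying the inequality to a set and its complement forces equality.
\end{enumerate}
The identity $\nu(\proj^{-1}(N))=m_{\hat t}(N)=0$ then follows for free.

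\paragraph{Remaining steps and context.} With Step~1 repaired, Steps~2--4 go through. Ergodicity passes to $m^*_{\hat t}$ simply because $\proj$ is a factor map, so the isomorphism mod null sets is not even needed there. The paper itself gives no proof of this proposition; it only cites Mauldin's Theorem~8.2. In the paper's own application the intervals $F_k$ are pairwise disjoint, so $N=\emptyset$ and your Step~1 works verbatim in that setting.
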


A detailed construction of all of these measures can be found in  \cite[Section 3]{MauldinUrbanski1996DimensionsMeasuresInfinite} and \cite[Section 8]{Mauldin1995InfiniteIteratedFunction}.

In our context this result provides the following proof of Theorem \ref{theorem:conformal_measure}:

\begin{proof}[Proof of Theorem \ref{theorem:conformal_measure}]
By applying Proposition \ref{proposition:existence_conformal_measure}, we prove the existence of a $\hat{t}$-conformal measure $m$, for some $\hat{t}\in(0,1)$.

Finally, by Proposition \ref{proposition:measure_on_index_set} and Proposition \ref{proposition:shift_invariant} we deduce the existence of a unique measure $m^*$, invariant and ergodic relative to $\vphi$, satisfying $m^* \sim m$.
\end{proof}

\appendix

\section{Implication of Condition \ref{hypothesis:h3}} \label{section:appendixA}

The following proposition establishes the relationship between \ref{hypothesis:h3} and the convergence of $P_1(t)$ (see Definition \ref{definition:pressure}).

\begin{proposition}\label{prop:series_bounds}
    Assume that the family $\Psi = \set{\psi_{F_n}}_{n \in \N}$ satisfies \ref{hypothesis:h3} with parameter $q \in (1, +\infty]$. Then the series
    \begin{equation}\label{equation:convergence_of_series}
        P_1(t) := \sum_{n=1}^\infty \left(\sup_{z \in [-1,1]} \abs{\psi_{F_n}'(z)}\right)^t
    \end{equation}
    converges for any $t > 1/q$ and diverges for any $0 \le t \le 1/q$. Furthermore, $\lim_{t \to 1/q^+} P_1(t) = +\infty$.
\end{proposition}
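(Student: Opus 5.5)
Write $a_n := \sup_{z\in[-1,1]}\abs{\psi_{F_n}'(z)}$. The plan is to compare $P_1(t)=\sum_n a_n^t$ with $p$-series via \ref{hypothesis:h3}, using the freedom to choose the exponent $p\le q$. Only finitely many terms precede $N_p$, and each is finite, since $a_n\le s<1$ by \ref{hypothesis:h1} (a $C^1$ contraction with constant $s$ has $\abs{\psi'}\le s$). So convergence of $P_1(t)$ depends only on the tail.

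\emph{Convergence for $t>1/q$.} Given $t>1/q$, pick a finite $p\le q$ with $pt>1$. This is possible: if $q<\infty$ take $p=q$; if $q=+\infty$ take any finite $p>1/t$. Then \ref{hypothesis:h3} gives $a_n^t\le C_p^t n^{-pt}$ for $n\ge N_p$, and the series converges by comparison with $\sum n^{-pt}$.

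\emph{Divergence for $0\le t\le 1/q$.} If $q=+\infty$, this range is only $t=0$. Then $a_n^0=1$ for every $n$, because $a_n>0$ ($\psi_{F_n}$ is a diffeomorphism), so $P_1(0)=\sum_n 1=+\infty$. If $q<\infty$, the lower bound in \ref{hypothesis:h3} (with $p=q$, for $n\ge N_q$) gives $a_n^t\ge C^{-t}n^{-qt}$ with $qt\le 1$, so the series diverges by comparison with the harmonic-type series $\sum n^{-qt}$.

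\emph{Blow-up as $t\to 1/q^+$.} For $q=+\infty$ we need $P_1(t)\to+\infty$ as $t\to0^+$. Each $a_n\in(0,1)$ gives $a_n^t\to 1$, so for any fixed $K$ we get $\liminf_{t\to0^+}P_1(t)\ge\lim_{t\to0^+}\sum_{n\le K}a_n^t=K$, and this holds for every $K$. For $q<\infty$, use $P_1(t)\ge C^{-t}\sum_{n\ge N_q}n^{-qt}$ together with the integral-test estimate $\sum_{n\ge N}n^{-r}\ge\int_{N}^\infty x^{-r}\,dx=N^{1-r}/(r-1)$ for $r>1$. With $r=qt\to1^+$ this lower bound tends to $+\infty$, while $C^{-t}$ stays bounded below. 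Alternatively, monotone convergence/Fatou as $t\downarrow 1/q$ combined with the divergence of $P_1(1/q)$ gives the same conclusion.

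I expect the main obstacle to be bookkeeping rather than analysis. The hypothesis must be read correctly: the upper bound holds for every finite $p\le q$, but the lower bound is trivial when $q=+\infty$. So the $q=+\infty$ endpoint must be treated separately, using only positivity of $a_n$ and the infinitude of $\mathcal J$ instead of \ref{hypothesis:h3}.
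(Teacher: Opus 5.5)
Your proposal is correct and follows essentially the same route as the paper: you compare with $p$-series via \ref{hypothesis:h3}, choosing a finite $p\le q$ with $pt>1$ for convergence, using the lower bound with $qt\le 1$ for divergence, and treating $q=+\infty$ through $a_n^0=1$. Your handling of the blow-up as $t\to 1/q^+$ is more careful than the paper's. The paper passes from termwise convergence $a_n^t\to 1$ to divergence of the sum without comment, and it applies the lower bound from $n=1$ instead of $n\ge N_q$; your integral-test bound $N^{1-r}/(r-1)$ and your partial-sum (or Fatou) argument supply exactly those missing justifications.
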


\begin{proof}
    We denote $a_n := \sup_{z \in [-1,1]} \abs{\psi_{F_n}'(z)}$, and analyze separately each case.
    
    \medskip\noindent
    \emph{Convergence for $t > 1/q$:}
    
    Because $t > 1/q$, we can always choose a finite exponent $p \le q$ such that $pt > 1$ (if $q < +\infty$ we choose $p = q$, otherwise if $q = +\infty$ we choose any finite $p > 1/t$).
    
    By \ref{hypothesis:h3}, there exist $C_p \ge 1$ and $N_p \in \N$ such that $a_n \le C_p n^{-p}$ for all $n \ge N_p$, so we get $a_n^t \le C_p^t n^{-pt}$, for all $n \ge N_p$. Since $pt > 1$, the series $\sum_{n\ge1} n^{-pt}$ converges and, by comparison, $P_1(t)$ also converges.
    
    \medskip\noindent
    \emph{Divergence for $0 \le t \le 1/q$:}
    
    We analyze based on the finiteness of $q$:
    \begin{itemize}
        \item If $q < +\infty$, \ref{hypothesis:h3} provides a global constant $C \ge 1$ such that $a_n \ge (C n^q)^{-1}$ for all sufficiently large $n$. At $t = 1/q$, we have $a_n^{1/q} \ge C^{-1/q} n^{-1}$. Since the harmonic series diverges, $P_1(1/q) = +\infty$. For any $0 \le t < 1/q$ we have $a_n^t \ge a_n^{1/q}$ for large $n$, so it also diverges.
        \item If $q = +\infty$, our convention $1/q = 0$ means the interval $[0, 1/q] = \set{0}$, and then the calculation is just $P_1(0) = \sum_{n=1}^\infty a_n^0 = \sum_{n=1}^\infty 1 = +\infty$.
    \end{itemize}
    
    \medskip\noindent
    \emph{Limit for $t\to1/q^+$:}

If $q < +\infty$, let $\epsilon>0$ be a sufficiently small number such that $t = 1/q+\epsilon < 1$; since $C\ge 1, q>1$, this implies $\dfrac{1}{C^{1/q+\epsilon}} \ge \dfrac{1}{C}$.

Then we proceed with the following calculation
	\[
    \sum_{n\ge1}a_n^t \ge \sum_{n\ge1}\left(\dfrac{1}{Cn^q}\right)^{1/q+\epsilon} \ge \sum_{n\ge1}\dfrac{1}{Cn^{1+q\epsilon}} \to +\infty,
    \]
as $\epsilon\to0$, implying the desired limit.

If $q = +\infty$, then $1/q = 0$, and since the terms $a_n$ are strictly positive, we have $a_n^t \to 1$ as $t \to 0$, so the series diverges as $+\infty$ as $t\to0$.
\end{proof}

Now, as mentioned before, we are going to prove that

\begin{lemma}\label{lemma:exponential_decay_measure}
    Suppose $\Psi$ satisfies the Bounded Distortion Property \ref{condition:bdp} with constant $K \ge 1$ and the open gap set $G \subset [-1,1]$ (as in \eqref{eq:G}) has positive Lebesgue measure $\mu(G) > 0$. Then, there exists a constant $0 < \rho < 1$ such that
    \[
        \sum_{\omega \in \cal{J}^n} \mu(\psi_\omega[-1,1]) \le 2(1-\rho)^{n-1},
    \]
    for all $n\ge1$.
\end{lemma}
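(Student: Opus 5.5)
The plan is an inductive "lost mass" argument. Set $S_n := \sum_{\omega\in\cal{J}^n}\mu(\psi_\omega[-1,1])$. Since the intervals $\psi_\omega[-1,1]$, $\omega\in\cal{J}^n$, are pairwise disjoint (injectivity and the open set condition), $S_n = \mu(\Delta_n)$. For $n=1$ we have $S_1 \le \mu([-1,1]) = 2$, which matches the bound $2(1-\rho)^0$.

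The heart of the argument is a uniform relative-gap estimate. Fix $\omega\in\cal{J}^n$ and write $I_\omega := \psi_\omega[-1,1]$. Decompose
\[
I_\omega \supset \psi_\omega(W)\,\dot\cup\,\psi_\omega(G),
\]
where $\psi_\omega(W) = \bigcup_{J\in\cal{J}} I_{\omega J}$. The pieces are disjoint because $G\cap W=\emptyset$ and $\psi_\omega$ is injective. By the mean value theorem and the Bounded Distortion Property with constant $K$,
\[
\mu(\psi_\omega(G)) \ge \inf|\psi_\omega'|\,\mu(G) \ge K^{-1}\sup|\psi_\omega'|\,\mu(G) \ge \frac{\mu(G)}{2K}\,\mu(I_\omega).
\]
The last inequality uses $\mu(I_\omega)\le 2\sup|\psi_\omega'|$.

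Now put $\rho := \mu(G)/(2K)\in(0,1)$; it is less than $1$ because $\mu(G)<2$ and $K\ge1$. The estimate then gives $\sum_{J}\mu(I_{\omega J}) \le (1-\rho)\mu(I_\omega)$. Summing over $\omega\in\cal{J}^n$ yields $S_{n+1}\le(1-\rho)S_n$, and induction from $S_1\le2$ gives $S_n\le 2(1-\rho)^{n-1}$.

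I expect no real obstacle; the care goes into two checks. First, we need $\mu(G)>0$, which is given as a hypothesis (it holds since $G$ is a nonempty open set). Second, the distortion step must apply to compositions $\psi_\omega$ of arbitrary length. This is exactly the content of the BDP as stated, uniform over $\eta\in\bigcup_j I^j$, and it only needs to be evaluated on $[-1,1]\subset V$.

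Afterwards, to get the statement in Remark \ref{remark} about $\sum_{\eta\in\cal{J}^N}\sup|\psi_\eta'|<1$, one would combine this with the reverse distortion bound $\sup|\psi_\eta'|\le K\mu(I_\eta)/2$. That gives $\sum_{\eta}\sup|\psi'_\eta|\le K(1-\rho)^{N-1}$, which is less than $1$ for $N$ large.
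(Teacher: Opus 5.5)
Your proposal is correct and is essentially the paper's own proof. It uses the same relative-gap estimate $\mu(\psi_\omega(G))\ge \frac{\mu(G)}{2K}\,\mu(\psi_\omega[-1,1])$ (mean value theorem plus bounded distortion), the same choice $\rho=\mu(G)/(2K)$, and the same induction on word length from $S_1\le 2$, while also spelling out two points the paper leaves implicit: that $\rho<1$, and that $\psi_\omega(W)$ and $\psi_\omega(G)$ are disjoint.
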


\begin{proof}
    Let us consider the ambient interval $X = [-1,1]$ and fix some $\omega \in \cal{J}^{n}$.
    
    By the Mean Value Theorem, we have the lower bound
    \[
    \mu(\psi_\omega(G)) \ge \mu(G) \inf_{z\in X} \abs{\psi_\omega'(z)}.
    \]
    
    Similarly, we have an upper bound
    \[
    \mu(\psi_\omega(X)) \le \mu(X) \sup_{z\in X} \abs{\psi_\omega'(z)} = 2 \sup_{z\in X} \abs{\psi_\omega'(z)}.
    \]
    Applying \ref{condition:bdp}, we have $\sup \abs{\psi_\omega'} \le M \inf \abs{\psi_\omega'}$, so we can deduce
    \begin{equation}\label{eq:bound_psi_G}
    \mu(\psi_\omega(G)) \ge \mu(G)\cdot\inf_{z\in X}\abs{\psi_\omega'(z)} \ge \mu(G)\dfrac{\sup_{z\in X}\abs{\psi_\omega'(z)}}{M} \ge \frac{\mu(G)}{M} \dfrac{\mu(\psi_\omega(X))}{2}.	
    \end{equation}

    Setting $\rho = \frac{\mu(G)}{2M}$, we have $0 < \rho < 1$.
    
    Now, we have 
    \[
        \sum_{j \in \cal{J}} \mu(\psi_{\omega j}(X)) \le \mu(\psi_\omega(X)) - \mu(\psi_\omega(G)) \le (1 - \rho)\mu(\psi_\omega(X)).
    \]
    The first inequality occurs because $\set{\psi_{\omega j}(X)}_{j\in\cal{J}}$ is a collection of disjoint sets contained in $\psi_\omega(X)\setminus\psi_\omega(G)$, and the second inequality is an application of \eqref{eq:bound_psi_G}.
    
    Summing over all words $\omega \in \cal{J}^{n}$ and applying this relation inductively starting from $n=1$, where $\sum_{J\in\cal{J}}\mu(\psi_{J}(X)) \le 2$, we get
    \[
    \sum_{\omega \in \cal{J}^n} \mu(\psi_\omega(X)) \le 2(1-\rho)^{n-1}.
    \]
\end{proof}

\begin{corollary}\label{corollary:exponential_bounds}
For all $n\ge1$ we have
\[
P_n(1) := \sum_{\omega \in \cal{J}^n} \sup_{z\in[-1,1]}\abs{\psi_{\omega}'(z)} \le M(1-\rho)^{n-1}.
\]
In particular, there exists $N\ge1$ such that
\[
P_N(1) < 1.
\]
\end{corollary}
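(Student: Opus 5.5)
The plan is to compare, for each word $\omega\in\cal{J}^n$, the supremum of $\abs{\psi_\omega'}$ with the Lebesgue measure of the image interval $\psi_\omega[-1,1]$, and then feed this into Lemma \ref{lemma:exponential_decay_measure}. Since $\psi_\omega$ is a diffeomorphism of $X=[-1,1]$ onto an interval, the Mean Value Theorem gives
\[
\mu(\psi_\omega(X)) \ge \mu(X)\inf_{z\in X}\abs{\psi_\omega'(z)} = 2\inf_{z\in X}\abs{\psi_\omega'(z)}.
\]
The Bounded Distortion Property \ref{condition:bdp}, which holds for $\Psi$ because $\Psi$ is a CIFS satisfying \ref{condition:bdp_alt}, gives $\sup_X\abs{\psi_\omega'}\le M\inf_X\abs{\psi_\omega'}$. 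Combining the two, I obtain
\[
\sup_{z\in X}\abs{\psi_\omega'(z)} \le \frac{M}{2}\,\mu(\psi_\omega(X)).
\]

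Summing over $\omega\in\cal{J}^n$ and invoking Lemma \ref{lemma:exponential_decay_measure} then yields
\[
P_n(1)\le \frac{M}{2}\cdot 2(1-\rho)^{n-1}=M(1-\rho)^{n-1}.
\]
For this I must check the lemma's hypothesis $\mu(G)>0$. The set $G=(-1,1)\setminus\ov{W}$ is open, and by \ref{hypothesis:h0} it contains $G_k\setminus F_k$ minus possibly finitely many points. This is a nonempty open set, since $G_k\supsetneq F_k$ is an open interval strictly containing the closed interval $F_k$. Hence $\mu(G)>0$. The constant $M$ appearing in the lemma's proof is the same distortion constant (the lemma's $K$), so the constants match.

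For the second claim, $0<\rho<1$ gives $M(1-\rho)^{n-1}\to0$, so I can choose $N$ with $M(1-\rho)^{N-1}<1$.

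The only delicate point is the positivity of $\mu(G)$. One needs $G_k\setminus F_k$ to meet $(-1,1)\setminus\ov{W}$ in an open set. This holds because the $G_j$ are pairwise disjoint, so $G_k\cap\ov{W}\subset \ov{F_k}=F_k$, and because $G_k\setminus F_k$ intersects $(-1,1)$ in a nonempty open set. The latter requires that $F_k$ is not all of $[-1,1]$, which follows from there being infinitely many disjoint $F_k$.
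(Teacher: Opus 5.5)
Your proposal is correct and follows the paper's proof: the Mean Value Theorem together with \ref{condition:bdp} gives $\sup_{[-1,1]}\abs{\psi_\omega'}\le \frac{M}{2}\,\mu(\psi_\omega[-1,1])$, and summing over $\omega\in\mathcal{J}^n$ and invoking Lemma~\ref{lemma:exponential_decay_measure} yields $M(1-\rho)^{n-1}$. Your extra check that $\mu(G)>0$ (via $G_k\cap\overline{W}=F_k$ and $F_k\neq[-1,1]$) supplies a hypothesis of the lemma that the paper leaves implicit; your phrase ``$G_k\setminus F_k$ minus possibly finitely many points'' should instead be ``$(G_k\setminus F_k)\cap(-1,1)$'', which is what your final paragraph correctly uses.
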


\begin{proof}
We simply note that, using \ref{condition:bdp} and Lemma~\ref{lemma:exponential_decay_measure}, we have
\begin{align*}
P_n(1)
&=\sum_{\omega \in \cal{J}^n} \sup_{z\in[-1,1]}\abs{\psi_{\omega}'(z)} \\
&\le \sum_{\omega \in \cal{J}^n} M\inf_{z\in[-1,1]}\abs{\psi_{\omega}'(z)} \\
&\le M \sum_{\omega \in \cal{J}^n} \dfrac{\mu(\psi_\omega[-1,1])}{\mu[-1,1]} \\
&\le \dfrac{M}{2}2(1-\rho)^{n-1} \\
&= M(1-\rho)^{n-1}.
\end{align*}

This expression converges to $0$ as $n \to+\infty$, then there must exist an integer $N \ge 1$ such that $P_N(1) < 1$. 
\end{proof}

\section{Complexity of the singular set $Z_1$.} \label{section:appendixB}

The purpose of this appendix is to illustrate that the singular set $Z_1$ can exhibit a wide variety of topological structures.

\subsection{Example where $Z_1$ is a triadic Cantor set}\label{example:cantor_gaps}
    Consider an infinite-piecewise expanding map defined on $W \subset [-1,1]$ whose inverse branches $\Psi = \set{\psi_n}_{n\ge1}$ are affine contractions. We group these branches into blocks indexed by $k \in \N$, such that the $k$-th block contains $2^{k-1}$ branches, and each branch in this block has a constant derivative $1/r^k$ for some $r > 3$. In this specific example, each block is located in the gaps of the $k$-th step in the construction of the Cantor set (an illustration can be found in Figure~\ref{fig:Z_1_cantor_set}).
    
    We define $\Lambda(r)$ as the invariant set resulting of this construction with $r>3$.
    
    To verify that this system satisfies the polynomial decay condition in \ref{hypothesis:h3}, consider $F_n$ in the $k$-th block, so $2^{k-1} \le n < 2^k$, which implies
    \begin{equation*}
        \dfrac{\log n}{\log 2} < k \le \dfrac{\log n}{\log 2} + 1.
    \end{equation*}
    The derivative of the functions in this block is $\sup \abs{\psi_n'} = r^{-k}$. Using our bounds for $k$, we can bound the derivative by
    \begin{equation*}
        r^{-\left(\frac{\log n}{\log 2} + 1\right)} \le r^{-k} < r^{-\frac{\log n}{\log 2}}.
    \end{equation*}
    By observing that $r^{-\frac{\log n}{\log 2}} = n^{-\frac{\log r}{\log 2}}$, this simplifies to
    \begin{equation*}
        \dfrac{1}{r n^{\frac{\log r}{\log 2}}} \le \sup \abs{\psi_n'} < \dfrac{1}{n^{\frac{\log r}{\log 2}}}.
    \end{equation*}
    
    Thus, the system satisfies the condition of \ref{hypothesis:h3} with parameter $q = \dfrac{\log r}{\log 2}$, lower-bound constant $C = r$, and upper-bound constant $C_q = 1$. The convergence condition in \eqref{equation:convergence_of_series} is therefore satisfied for all $t > 1/q = \dfrac{\log 2}{\log r}$. Also, Theorem \ref{theorem:geometry_of_invariant_set} guarantees that $\hausdorff{\Lambda(r)} > \dfrac{\log 2}{\log r}$.

    By directly applying \cite[Theorem 9.3]{Falconer2006FractalGeometryMathematical} and \cite[Theorem 3.15]{MauldinUrbanski1996DimensionsMeasuresInfinite}, since the functions are affine (and therefore similarities), the established theory of CIFS guarantees that the Hausdorff dimension is given by the unique solution (in $t$) to
    \begin{equation*}
        \sum_{n=1}^\infty \abs{\psi_n'}^t = 1.
    \end{equation*}
    
    Since the $k$-th block has $2^{k-1}$ branches, each contributing $(r^{-k})^t$, the equation becomes:
    \begin{equation*}
        \sum_{k=1}^\infty 2^{k-1} (r^{-kt}) = \dfrac{1}{2} \sum_{k=1}^\infty \left(2r^{-t} \right)^k = 1.
    \end{equation*}
    
    This series converges for $2r^{-t} < 1 \iff t > \dfrac{\log 2}{\log r}$. By applying the infinite geometric series formula, we obtain
    \begin{equation*}
        \dfrac{1}{2} \left(\dfrac{2r^{-t}}{1 - 2r^{-t}} \right) = 1,
    \end{equation*}
    which implies $t = \dfrac{\log 3}{\log r}$.

    So for this case, we have exactly $\hausdorff{\Lambda(r)} = \dfrac{\log 3}{\log r}$. Since $r > 3$, we easily verify that $\dfrac{\log 2}{\log r} < \hausdorff{\Lambda(r)} < 1$, exactly as Theorem \ref{theorem:geometry_of_invariant_set} assures.
    
    Note that we have $\hausdorff{\Lambda(r)}\to 0$ as $r\to+\infty$. In this case, we can have $\hausdorff{\Lambda(r)} < \hausdorff{Z_1} = \log2/\log3$ (this last value being the Hausdorff dimension of the standard triadic Cantor set).

\begin{figure}[h!]
\centering 
\begin{overpic}[width=0.7\linewidth]{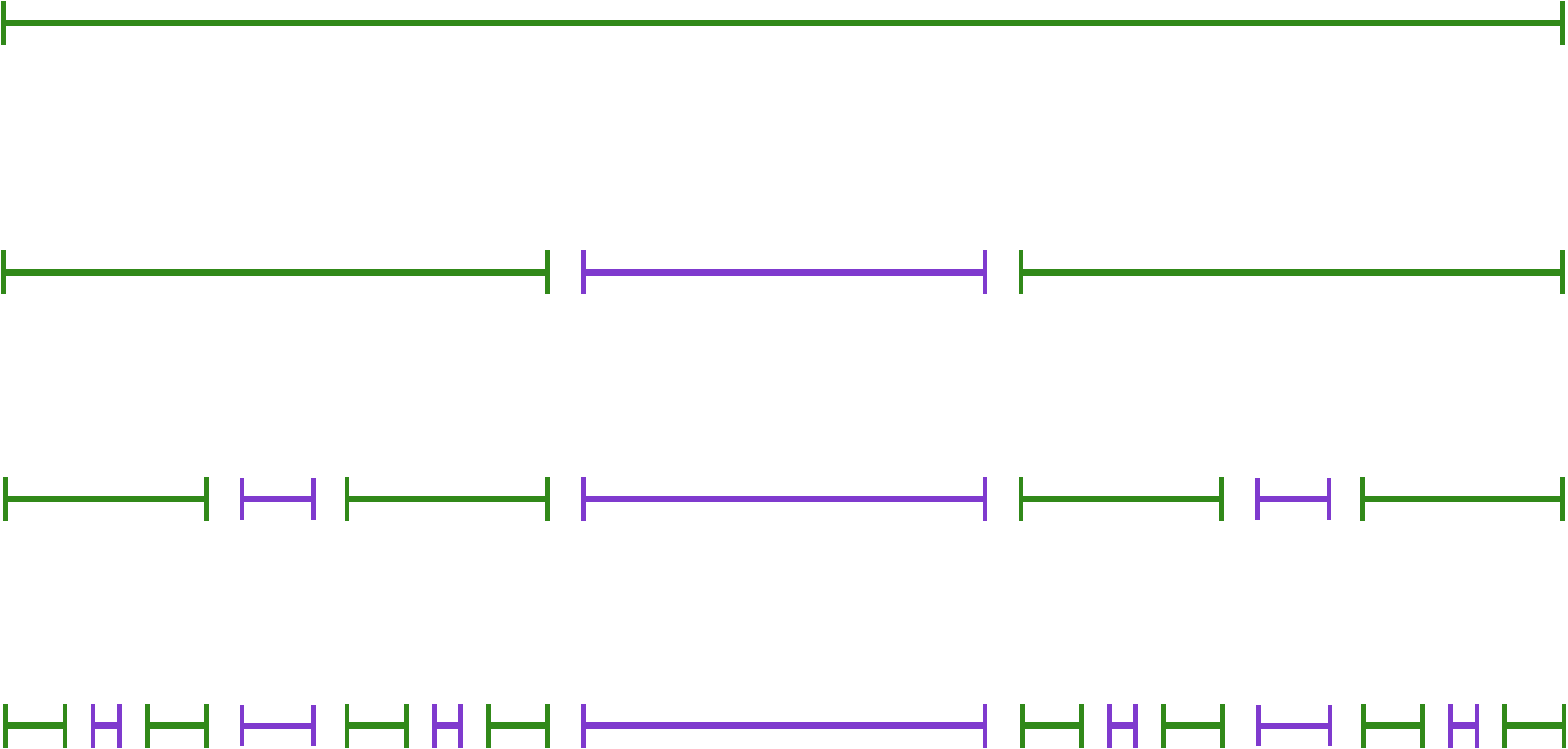}
\put(47.5,32){$F_1$}
\put(48,27){$r$}
\put(47.5,18){$F_1$}
\put(16,18){$F_2$}
\put(16.5,12){$r^2$}
\put(81,18){$F_3$}
\put(81.5,12){$r^2$}
\put(47.5,4){$F_1$}
\put(16,4){$F_2$}
\put(81,4){$F_3$}
\put(5,4){$F_4$}
\put(5.5,-3){$r^3$}
\put(27,4){$F_5$}
\put(27.5,-3){$r^3$}
\put(70,4){$F_6$}
\put(70.5,-3){$r^3$}
\put(92,4){$F_7$}
\put(92.5,-3){$r^3$}
\put(47.5,-7){$\ldots$}
\end{overpic}
\vspace{0.8cm}
\caption{Example with $Z_1$ being the Cantor set, detailed in Example \ref{example:cantor_gaps}}
\label{fig:Z_1_cantor_set}
\end{figure}

\subsection{Example where $Z_1$ is a fat triadic Cantor set}\label{example:cantor_fat}
We can generalize further this construction to obtain examples of $Z_1$ with non-zero Hausdorff dimension and non-zero Lebesgue measure. To see that, consider the process just made, but with fat Cantor sets, in order to get a $Z_1$ set with positive Lebesgue measure and $\hausdorff{Z_1}=1$. Besides, by making in this construction the intervals of $W$ as small as desired and taking the functions on $\Psi$ with rapidly enough growth slopes, we can have these kind of occurrences while also getting $\hausdorff{\Lambda}$ as small as desired and $\mu(\Lambda)=0$, showing that the maximum in Theorem \ref{theorem:geometry_of_closure_of_lambda} is really needed and that $\mu(\cl{\Lambda}) > \mu(\Lambda)$ are in fact realizabe in Theorems \ref{theorem:geometry_of_invariant_set} and \ref{theorem:geometry_of_closure_of_lambda}.

\subsection{Countably stable properties of $Z_1$}\label{example:structure_of_Z1}
At last, if $Z_1$ possesses a property that is invariant under Lipschitz functions and stable under countable unions, then $Z$ inherits this property. Notable examples include countability, Hausdorff dimension and null Lebesgue measure. In particular, this inheritance mechanism allowed for the more precise values of the cardinality and Lebesgue measure obtained in \cite[Theorem A]{CunhaNovaesPonce2024HausdorffDimensionCantor}, compared to the bounds established in Theorem \ref{theoremA}.

\section*{Acknowledgments}

MGCC was partially supported by S\~{a}o Paulo Research Foundation (FAPESP) grant 2025/01340-7. DDN was partially supported by the São Paulo Research Foundation (FAPESP), grants 2024/15612-6 and 2026/03312-3; by the Conselho Nacional de Desenvolvimento Científico e Tecnológico (CNPq), grant 301878/2025-0; and by the Coordenação de Aperfeiçoamento de Pessoal de Nível Superior - Brasil (CAPES), through the MATH-AmSud program, grant 88881.179491/2025-01. GP was partially supported by S\~{a}o Paulo Research Foundation (FAPESP) grants 2022/07762-2 and 2018/13481-0.

\bibliography{../references.bib}

\begin{thebibliography}{1}

\bibitem{CunhaNovaesPonce2024HausdorffDimensionCantor}
M.~G.~C. Cunha, D.~D. Novaes, and G.~Ponce.
\newblock On the {{Hausdorff}} dimension and {{Cantor}} set structure of sliding {{Shilnikov}} invariant sets.
\newblock {\em Nonlinearity}, 37(12):125023, Dec. 2024.

\bibitem{Falconer2006FractalGeometryMathematical}
K.~J. Falconer.
\newblock {\em Fractal Geometry: Mathematical Foundations and Applications}.
\newblock Wiley, Chichester, 2. ed., repr edition, 2006.

\bibitem{Filippov1988DifferentialEquationsDiscontinuous}
A.~F. Filippov.
\newblock {\em Differential {{Equations}} with {{Discontinuous Righthand Sides}}}, volume~18 of {\em Mathematics and {{Its Applications}}}.
\newblock Springer Netherlands, Dordrecht, 1988.

\bibitem{Mauldin1995InfiniteIteratedFunction}
R.~D. Mauldin.
\newblock Infinite {{Iterated Function Systems}}: {{Theory}} and {{Applications}}.
\newblock In C.~Bandt, S.~Graf, and M.~Z{\"a}hle, editors, {\em Fractal {{Geometry}} and {{Stochastics}}}, pages 91--110. Birkh\"auser Basel, Basel, 1995.

\bibitem{MauldinUrbanski1996DimensionsMeasuresInfinite}
R.~D. Mauldin and M.~Urba{\'n}ski.
\newblock Dimensions and {{Measures}} in {{Infinite Iterated Function Systems}}.
\newblock {\em Proceedings of the London Mathematical Society}, s3-73(1):105--154, July 1996.

\bibitem{NovaesTeixeira2019ShilnikovProblemFilippov}
D.~D. Novaes and M.~A. Teixeira.
\newblock Shilnikov problem in {{Filippov}} dynamical systems.
\newblock {\em Chaos: An Interdisciplinary Journal of Nonlinear Science}, 29(6):063110, June 2019.

\bibitem{PacificoRovellaViana1998InfinitemodalMapsGlobal}
M.~J. Pacifico, A.~Rovella, and M.~Viana.
\newblock Infinite-modal maps with global chaotic behavior.
\newblock 1998.

\bibitem{Pugh2015RealMathematicalAnalysis}
C.~C. Pugh.
\newblock {\em Real {{Mathematical Analysis}}}.
\newblock Undergraduate {{Texts}} in {{Mathematics}}. Springer International Publishing, Cham, 2015.

\bibitem{Schleicher2007HausdorffDimensionIts}
D.~Schleicher.
\newblock Hausdorff {{Dimension}}, {{Its Properties}}, and {{Its Surprises}}.
\newblock {\em The American Mathematical Monthly}, 114(6):509--528, June 2007.

\end{thebibliography}
\bibliographystyle{abbrv}

\end{document}